\DeclareMathOperator*{\vol}{vol}
\newtheorem{thm}{Theorem}[section]
\newtheorem{lem}[thm]{Lemma}
\newtheorem{prop}[thm]{Proposition}
\newtheorem*{rem}{Remark}
\newtheorem{definition}[thm]{Definition}
\newcommand{\Z}{{\mathbb Z}}
\newcommand{\N}{{\mathbb N}}
\newcommand{\R}{{\mathbb R}}
\newcommand{\tmop}[1]{\ensuremath{\operatorname{#1}}}
\def\tmod{\,(\tmop{mod}}
\def\pamod{\! \! \! \! \pmod}
\def \sl  {{\hbox{SL}_2( {\mathbb R})} }
\def \psl  {{\hbox{PSL}_2( {\mathbb R})} }
\def \pslz  {{\hbox{PSL}_2( {\mathbb Z})} }
\newcommand{\Nc}{{\mathcal{N}}}
\newcommand{\Hb}{{\mathbb{H}}}
\newcommand{\Db}{{\mathbb{D}}}
\newcommand{\Sc}{{\mathcal{S}}}
\newcommand{\C}{{\mathbb{C}}}
\numberwithin{equation}{section}
\newcommand{\fixmehidden}[1]{}
\title[Distribution of lattice points on hyperbolic circles]{On the distribution of lattice points on hyperbolic circles}
\author{Dimitrios Chatzakos, P\"ar Kurlberg, Stephen Lester and Igor Wigman}
 \address{IMB, Université de Bordeaux, Bâtiment A33, 33405 Talence, France}
 \email{dimitrios.chatzakos@math.u-bordeaux.fr}
 \address{Department of Mathematics, KTH, 
 SE-100 44 Stockholm, Sweden}
 \email{kurlberg@math.kth.se}
\address{Department of Mathematics, King's College London, London WC2R 2LS, UK}
 \email{steve.lester@kcl.ac.uk}
\address{Department of Mathematics, King's College London, London WC2R 2LS, UK}
\email{igor.wigman@kcl.ac.uk}
\begin{document}
\begin{abstract}
  We study the fine distribution of lattice points lying on expanding
  circles in the hyperbolic plane $\mathbb{H}$. The angles of lattice
  points arising from the orbit of the modular group $\pslz$, and
  lying on hyperbolic circles, are shown to be equidistributed for
  {\em generic} radii. However, the angles fail to equidistribute on a thin
  set of exceptional radii, even in the presence of growing
  multiplicity. Surprisingly, the distribution of angles on hyperbolic
  circles turns out to be related to the angular distribution of
  $\mathbb{Z}^2$-lattice points (with certain parity conditions) lying
  on circles in $\mathbb{R}^2$, along a thin subsequence of radii. A
  notable difference is that measures in the hyperbolic setting can
  break symmetry --- on very thin subsequences they are not invariant
  under rotation by $\frac{\pi}{2}$, unlike the Euclidean setting where
  all measures have this invariance property.

\end{abstract}
\date{\today}

\maketitle

\section{Introduction}
\label{sec:introduction}

\subsection{Background and motivation} We study the angular distribution of lattice points  on
hyperbolic circles --- the boundary of balls with respect to the
hyperbolic distance --- and show that equidistribution holds ``generically''
as the radius grows. We  also show that there are subsequences
where equidistribution fails to hold, even if the multiplicity is
growing. Refined equidistribution results of this style have been studied for
the case of Euclidean circles in \cite{kurlbergwigman}.
For the case of the $n$-dimensional hyperbolic space $\mathbb{H}^n$ equidistribution
results for large annuli of fixed width were studied in
\cite{boca,good,marklofvinogradov,risagerrudnick,risagertruelsen,truelsen}.

In this paper we focus on the $2$-dimensional case. Let $\mathbb{H} := \mathbb{H}^2$ denote the hyperbolic plane
which can be identified with the upper half plane
\begin{align*}
\mathbb{H} = \left\{ x+iy: x \in \mathbb{R}, y \in \mathbb{R}_{>0} \right\}.
\end{align*}
The plane $\mathbb{H}$ is equipped with the hyperbolic distance $\rho(\cdot,\cdot)$, that for $z,w \in \mathbb{H}$ is given by the relation
\begin{align} \label{distancefunction}
\cosh (\rho (z,w)) = 1+ \frac{|z-w|^2}{2 \Im(z) \Im(w)}.
\end{align}
The projective special linear group $\psl := \sl / \{\pm I\}$ acts on
$\mathbb{H}$ by M\"obius transformations: for
\begin{align*}
\gamma =
\begin{pmatrix} a & b \\ c & d  \end{pmatrix}
\in \psl
\end{align*}
and $z \in \mathbb{H}$ fixed, the standard action is given by
\begin{align*}
\gamma(z) = \frac{az+b}{cz+d}.
\end{align*}
In fact, $\psl$ is precisely the group of orientation preserving isometries of $\mathbb{H}$.

We will consider discrete subsets of $\mathbb{H}$ given by the orbit of
various  subsets of the \textit{modular group} $\Gamma = \pslz$.
(The case of congruence subgroups introduces some interesting novel features, and will be addressed in future work).
With $w=i$ and $z = \gamma(i)$ we have
\begin{align} \label{eq:distance}
2 \cosh (\rho( \gamma(i), i)) =
\| \gamma \|^{2} = a^{2}+b^{2}+c^{2}+d^{2} .
\end{align}
For $n \in \N$ let $\Gamma^{n} := \{ \gamma \in \Gamma :
\|\gamma \|^{2} = n \}$, and define the set
\begin{equation}
\label{definesetN}
\mathcal{N} := \{ n \in \mathbb N : |\Gamma^{n}|>0 \}
= \left\{ n=  a^{2}+b^{2}+c^{2}+d^{2} \in \mathbb{N}:
\begin{pmatrix} a & b \\ c & d  \end{pmatrix}
\in \Gamma  \right\}.
\end{equation}

We wish to determine the distribution of the points
$
\{ \gamma(i) : \gamma \in \Gamma^n \}
$
as $n$ grows along integers $n \in \mathcal{N}$.
It is convenient to conformally map
$\mathbb{H}$ into the hyperbolic disc $\mathbb{D} = \{ z \in \mathbb{C}, |z| <1\}$ (endowed with the hyperbolic metric) by
\begin{align} \label{conformalmap}
f(z) =
\frac{z-i}{1-iz}.
\end{align}
Clearly $f(i)=0$, and the points $f(\gamma(i))$, for
$\gamma \in \Gamma^n$ all lie on a circle centered at $0$.
Thus, to determine the distribution of lattice points
on the original hyperbolic circle it suffices to determine
the distribution of
angles (or complex arguments)
$$
\theta(\gamma) :=
\arg f( \gamma(i))
$$
as $\gamma$ ranges over elements in $\Gamma^n$.
In order to study the set of possible configurations  we
define probability measures $\mu_{n}$ (for  $n \in \mathcal{N}$) on
$S^{1}$, supported on a finite number of points, by letting
\begin{equation}
\label{eq:mun def}
\mu_{n} :=
\frac{1}{|\Gamma^n|}
\sum_{\gamma \in \Gamma^{n}}
\delta_{\theta(\gamma) }.
\end{equation}

\subsection{Statement of the principal results}
\vspace{-2mm}
\subsubsection{Generic equidistribution}
Our first result states that lattice points arising from the action of
the modular group are asymptotically {\em equidistributed} for ``generic''
$n \in \mathcal{N}$, in the sense that $\mu_{n}$ weakly tends to
$\mu_{\text{Haar}}$ for $n$ tending to infinity along a generic
subsequence, where $\mu_{\text{Haar}}$ denotes the Haar measure on
$S^{1}$ normalized to have mass one.
(For an illustration of approximate equidistribution, we plot two
example point configurations in Figure~\ref{fig:equi}).
In fact, stronger than that, we
give a quantitative bound for the discrepancy.
\begin{thm}
\label{thm:equidistributed}
Letting $\mathcal{N}(x) = \{ n \in \mathcal{N} : n \le x \}$ we
have
$\mathcal{N}(x) \asymp x/\log x $. Further, for all but
$o( |\mathcal{N}(x)|)$ integers $n \in \mathcal{N}(x)$, we have $|\Gamma^n|\asymp (\log n)^{\log 2 \pm o(1)}$ and
moreover
\begin{align*}
\sup_{I \subset S^{1}}
\left|
\frac{|\{ \gamma \in \Gamma^n : \theta(\gamma) \in I\}| }{|\Gamma^{n}|}
-\frac{|I|}{2\pi}
\right|
\ll \frac{1}{|\Gamma^n|^{\vartheta-o(1)}},
\end{align*}
where $\vartheta=\log(\pi/2)/\log 2=0.651496129\ldots$.
\end{thm}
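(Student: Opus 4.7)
The plan is to parametrize $\Gamma^n$ via Gaussian integer pairs. Conjugating $\gamma \in \Gamma$ by the Cayley matrix $F = \left(\begin{smallmatrix} 1 & -i \\ -i & 1 \end{smallmatrix}\right)$ that realizes $f$ at the matrix level, a direct computation gives
\begin{equation*}
F\gamma F^{-1} = \tfrac{1}{2}\begin{pmatrix} \alpha & \beta \\ \bar\beta & \bar\alpha\end{pmatrix}, \qquad \alpha = (a+d)+i(b-c),\ \beta = (b+c)+i(a-d).
\end{equation*}
Since $f(\gamma(i))$ is the image of $0$ under this element of $\operatorname{SU}(1,1)$, namely $\beta/\bar\alpha$, one obtains $\theta(\gamma) = \arg(\alpha\beta)$. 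The conditions $\gamma \in \Gamma^n$ translate to $\alpha,\beta \in \Z[i]$ with $|\alpha|^2 = n+2$, $|\beta|^2 = n-2$, and a parity compatibility ensuring $a,b,c,d \in \Z$; crucially the determinant constraint $ad-bc=1$ is automatic from $|\alpha|^2-|\beta|^2 = 4$. This yields $|\Gamma^n| \asymp r_2(n+2)\, r_2(n-2)$.

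In particular, $n \in \mathcal{N}$ iff both $n\pm 2$ are sums of two squares with compatible parity. Hence $|\mathcal{N}(x)| \asymp x/\log x$ reduces to estimating the number of $n \le x$ with both $n-2$ and $n+2$ sums of two squares: the upper bound follows from Selberg's sieve, and the lower bound from a Hooley-type asymptotic for the number of $n \le x$ with both $n$ and $n+4$ sums of two squares. For the typical size of $|\Gamma^n|$, the formula $r_2(m)/4 = \prod_{p^e \Vert m,\, p \equiv 1(4)}(e+1)$ combined with an Erd\H{o}s--Kac type result (giving $\omega_1(m) = (\tfrac12+o(1))\log\log m$ for almost all sums of two squares $m$, where $\omega_1$ counts distinct prime divisors $\equiv 1 \pmod 4$) yields $|\Gamma^n| = (\log n)^{\log 2 \pm o(1)}$ on a full-density subset of $\mathcal{N}(x)$.

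For the discrepancy I would apply the Erd\H{o}s--Turán inequality, reducing the task to the Weyl sums $S_k(n) := \sum_{\gamma \in \Gamma^n} e^{ik\theta(\gamma)}$ for $1 \le k \le K := \lfloor|\Gamma^n|^\vartheta\rfloor$. Via the Gaussian parametrization, $S_k(n)$ essentially factorizes as $S_k^{(n+2)} \cdot S_k^{(n-2)}$ with
\begin{equation*}
S_k^{(m)} := \sum_{z \in \Z[i],\, |z|^2 = m} (z/|z|)^k.
\end{equation*}
For $4 \mid k$ this is a Hecke character sum, multiplicative in $m$: writing $\phi_p = \arg \pi_p$ for a Gaussian prime above $p \equiv 1 \pmod 4$, one computes $|S_k^{(p^e)}|/r_2(p^e) = |\sin((e+1)k\phi_p)|/((e+1)|\sin(k\phi_p)|)$, which at squarefree $p$-parts collapses to $|\cos(k\phi_p)|$. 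By Hecke's equidistribution theorem the angles $\phi_p$ equidistribute in $[0, \pi/2]$, and $\int_0^{\pi/2} |\cos(k\phi)| \tfrac{2}{\pi} d\phi = 2/\pi$ for every integer $k \ge 1$, so heuristically $|S_k(n)|/|\Gamma^n| \approx (2/\pi)^{\omega_1(n+2)+\omega_1(n-2)}$. Since this exponent is $(1+o(1))\log\log n$ on a full-density subset of $\mathcal{N}(x)$, the bound equals $(\log n)^{-\log(\pi/2)+o(1)} = |\Gamma^n|^{-\vartheta + o(1)}$, producing exactly $\vartheta = \log(\pi/2)/\log 2$.

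The main obstacle is turning this heuristic into a pointwise bound that is uniform in $k \le K$ and holds off an exceptional set of size $o(|\mathcal{N}(x)|)$. I would attack this via a high-moment estimate for $\sum_{n \in \mathcal{N}(x)} (|S_k(n)|/|\Gamma^n|)^{2\ell}$, exploiting the multiplicative factorization together with a quantitative Hecke equidistribution of suitable uniformity to evaluate the moments close to their expected $(2/\pi)^{(2\ell)(1+o(1))\log\log n}$ size; a Chebyshev bound combined with a union bound over $k \le K$ (with $\ell \to \infty$ of appropriate order, e.g.\ $\ell \asymp \log\log\log n$) then yields the claimed discrepancy on a full-density subset of $\mathcal{N}(x)$.
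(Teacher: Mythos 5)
Your parametrization of $\Gamma^n$ via $\alpha = (a+d)+i(b-c)$, $\beta = (b+c)+i(a-d)$ with $|\alpha|^2=n+2$, $|\beta|^2=n-2$ and $\theta(\gamma)=\arg(\alpha\beta)$ is exactly the change of variables the paper uses (cf.\ \eqref{eq:ydef}), and your counts $|\mathcal{N}(x)|\asymp x/\log x$ and $|\Gamma^n|\asymp(\log n)^{\log 2\pm o(1)}$ are in the right spirit (Hooley for the lower bound, sieve upper bound, Chernoff/Erd\H{o}s--Kac normal order of $\omega_1$).

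The genuine gap is the factorization $S_k(n)=S_k^{(n+2)}\cdot S_k^{(n-2)}$. The map $\gamma\mapsto(\alpha,\beta)$ is \emph{not} onto all pairs of Gaussian integers of the right norms: to have $a,b,c,d\in\Z$ one needs the parity compatibility $\Re\alpha\equiv\Im\beta$ and $\Im\alpha\equiv\Re\beta\pmod 2$, which couples $\alpha$ and $\beta$, so the sum over $\Gamma^n$ does not split as a product of two independent Gaussian-integer sums. A symptom of this is your claim that the factorization forces $S_k(n)=0$ for $4\nmid k$ (since the unrestricted $S_k^{(m)}$ vanishes then); that would make $\widehat{\mu_n}(2)=0$ for every $n$, which contradicts Theorem~\ref{thm:break} of the paper --- the hyperbolic measures are \emph{not} in general invariant under rotation by $\pi/2$, precisely because of the parity constraint. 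The paper handles this by replacing the unrestricted sum with the ``primary'' sum $w_k$ in \eqref{eq:wk def}, which is multiplicative for \emph{all} even $k$, and proving the exact relation $v_k(n)=(-1)^{k/2}w_k(n)$ for odd $n$ (identity \eqref{eq:claim1}); this is the object whose factorization $w_k(n^2-4)=w_k(n+2)w_k(n-2)$ legitimately encodes the parity condition. Without an analogue of this step your Erd\H{o}s--Tur\'an argument has no handle on the $k\equiv 2\pmod 4$ Weyl sums.

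A secondary difference: you propose bounding a $2\ell$-th moment of $|S_k(n)|/|\Gamma^n|$ with $\ell\to\infty$ and then Chebyshev plus a union bound over $k$. The paper avoids high moments entirely: it tilts the \emph{first} moment by $e^{\alpha\omega_1}$, applies Nair--Tenenbaum \eqref{eq:NT} together with the Erd\H{o}s--Hall estimate \eqref{eq:eh} for $\sum_{p\le X}|\cos(k\theta_p)|$, and optimizes $\alpha=\log(\pi/2)$. This produces the same exponent $\vartheta=\log(\pi/2)/\log 2$ with much less work, and keeps the $k$-dependence controlled (only $k\le\log X$ are needed). Your high-moment route could in principle be made to work after fixing the factorization, but you would need a quantitative Hecke-sector equidistribution uniform enough to control moments of growing order, which is substantially heavier machinery than what the first-moment argument requires.
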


\begin{figure}[h]
  \centering
\includegraphics[width=7.5cm]{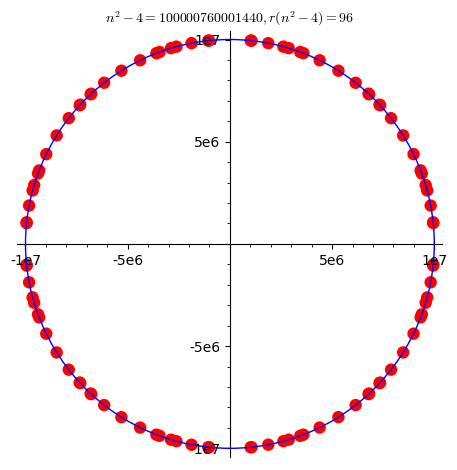}
\includegraphics[width=7.5cm]{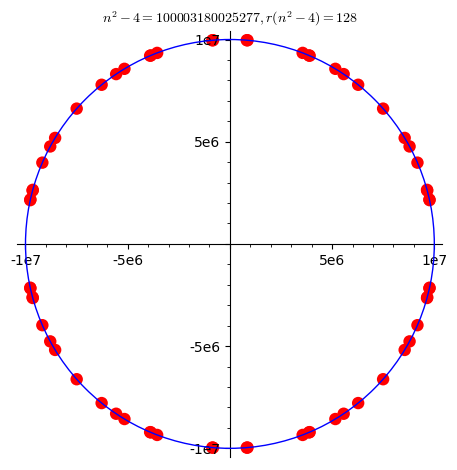}
\caption{Left: Points $(x,y) \in \Z^2$ s.t. $x^{2}+y^{2}=n^{2}-4$. Right:
  Points $(x,y) \in \Z^2$ s.t. $x^{2}+y^{2}=n^{2}-4$ and $x$ even.}
  \label{fig:equi}
\end{figure}

As a corollary to Theorem~\ref{thm:equidistributed}, we determine the
distribution, for generic $n\in\Nc$ tending to infinity,
of the real parts
$
\{ \Re \gamma(i) \tmod 1) : \gamma \in \Gamma^n \},
$
and  show (cf. Section~\ref{sec:distr-real-parts}) that the corresponding
probability density function is given by
\begin{equation}
\label{eq:f pdf Re}
p(x) =
\frac{1}{\pi}
\sum_{k \in \Z} \frac{1}{1+(x+k)^{2}}
=
\frac{\cosh(\pi)\cdot \sinh(\pi)}{\cosh(\pi)^{2}-\cos(\pi x)^{2}},
\quad x\in [0,1].
\end{equation}
That is, for an interval $I\subseteq [0,1]$, the
proportion of those $\gamma\in\Gamma^{n}$ with
$\Re \gamma(i) \tmod 1) \in I$ is asymptotically given by
$\int_{I}p(x)dx$ as $n \rightarrow \infty$ along a density one subsequence of $n \in \mathcal N$.
This can be viewed as a thin set analogue of the
equidistribution result attributed to Good
for the set $\{ \Re \gamma(z) \tmod 1) : \gamma \in \pslz, \Im \gamma(z) \ge \varepsilon \}$, as
$\varepsilon \to 0$ (see \cite[Theorem~6.2]{neunhoffer},
\cite[Eq. (3.27)]{good-equidistribution-real-part-mod-one} and
\cite[Theorem~1.2]{risagerrudnick}).
However, while real parts modulo one are equidistributed when
ordering by the imaginary part (in particular having
constant probability density functions), ordering by distance to $i$
introduces minute fluctuations, cf. Figure~\ref{fig:cosh plot}.
\begin{figure}[h]
  \includegraphics[width=6cm]{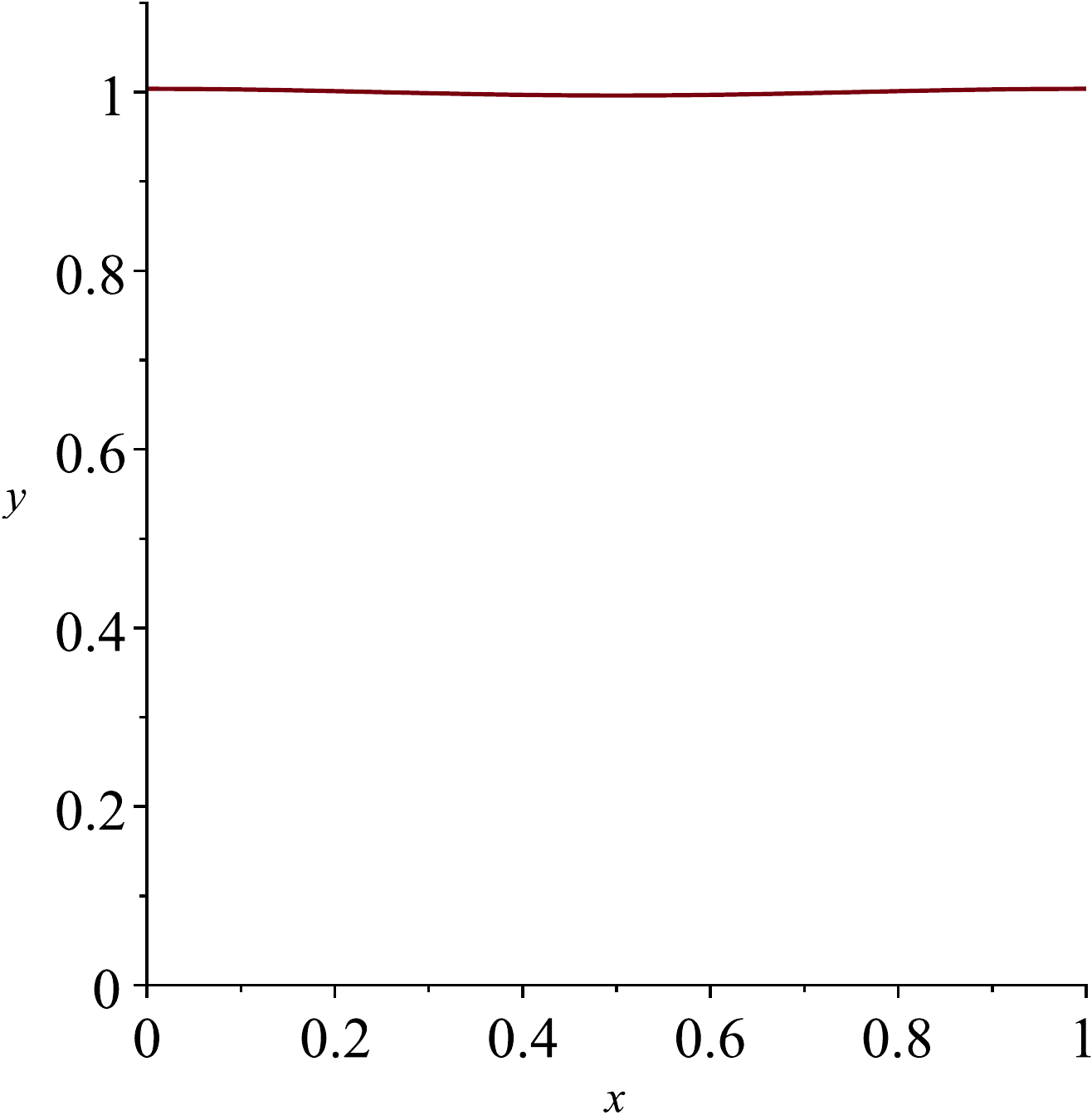}
  \includegraphics[width=6cm]{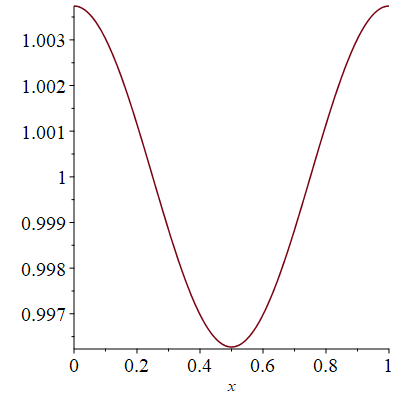}
\caption{Left: The plot of the asymptotic probability density function
$ p(x)= \frac{\cosh(\pi)\cdot \sinh(\pi)}{\cosh(\pi)^{2}-\cos(\pi x)^{2}} $
of $\{ \Re \gamma(i) \tmod 1) : \gamma \in \Gamma^n \}$ along a generic sequence $\{n\}\subseteq \Nc$.
Right: Same plot, magnified in the $y$-direction.}
  \label{fig:cosh plot}
\end{figure}


A key ingredient in the proof of Theorem~\ref{thm:equidistributed} is the
remarkable fact that the set of hyperbolic angles
$\{\theta(\gamma) : \gamma \in \Gamma^{n}\}$ is the {\em same} as the
set of angles of Euclidean $\Z^2$-lattice points, having even
$x$-coordinate, on the circle of radius $\sqrt{n^{2}-4}$.
(In essence, integer points on the {\em surface} given by the two equations
$a^{2}+b^{2}+c^{2}+d^{2} = n$ and $ad-bc=1$ can be identified with
integer points on the {\em curve} given by
$4x^{2}+y^{2} = n^{2}-4$.)

\begin{prop} \label{prop:keyprop}
Let $\Sc:=\{x^{2}+y^{2}:\:x,y\in\Z\}$ denote the set of integers
expressible as sums of two integer squares.  Then
$\mathcal{N} = \{ n \in \Z : n^{2}-4 \in \mathcal{S} \}
$
and, for $n \in \mathcal{N}$, we have
\begin{equation}
  \label{eq:even-x-coordinate-circle}
\{ \theta(\gamma) : \gamma \in \Gamma^{n} \}
=
\left \{ \arg( x + iy) : x,y\in \Z, x \equiv 0 \tmod 2), x^{2} + y^{2} =
  n^{2}-4 \right\}.
\end{equation}
\end{prop}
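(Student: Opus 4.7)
The plan is to compute $\theta(\gamma)$ explicitly in terms of the matrix entries $(a,b,c,d)$ and to recognize the resulting point as an even-$x$-coordinate lattice point on a circle of squared radius $n^2-4$.

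First, using $ad-bc=1$, direct calculation gives $\gamma(i) = (P+i)/Q$ with $P := ac+bd$ and $Q := c^2+d^2 > 0$. Applying the Cayley map \eqref{conformalmap} and cancelling positive factors, one finds
$$\theta(\gamma) = \arg\bigl(2x + i(x^2+y^2-1)\bigr), \qquad x+iy = \gamma(i).$$
The Brahmagupta identity $(ac+bd)^2+(ad-bc)^2 = (a^2+b^2)(c^2+d^2)$ combined with $ad-bc=1$ gives $P^2+1 = (n-Q)Q$, and substituting this back, after clearing the positive factor $Q$, yields
$$\theta(\gamma) = \arg(X+iY), \qquad X := 2(ac+bd),\ Y := a^2+b^2-c^2-d^2.$$
A direct expansion (again using $P^2 = nQ - Q^2 - 1$) gives $X^2+Y^2 = n^2-4$, while $X$ is manifestly even. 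This proves the inclusion ``$\subseteq$'' in \eqref{eq:even-x-coordinate-circle} and the inclusion $\mathcal{N} \subseteq \{n : n^2-4 \in \mathcal{S}\}$.

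For the reverse inclusions I would invert this construction using Gaussian integers. Suppose $(X,Y) \in \Z^2$ with $X^2+Y^2 = n^2-4$ and $X = 2m$ even. Since $n^2-4 \not\equiv 2 \pmod 4$, parity forces $Y \equiv n \pmod 2$; thus $P := (n+Y)/2$ and $Q := (n-Y)/2$ are non-negative integers with $P+Q = n$ and $PQ = m^2+1$. Any factorisation $m-i = z_1 z_2$ in $\Z[i]$ with $|z_1|^2 = P$ and $|z_2|^2 = Q$ yields, on writing $z_1 = a+bi$ and $z_2 = c-di$ with $a,b,c,d\in\Z$, the identities $ac+bd=m$, $ad-bc=1$, $a^2+b^2=P$, $c^2+d^2=Q$, so that $\gamma := \begin{pmatrix} a & b \\ c & d \end{pmatrix} \in \Gamma^n$ is sent to the prescribed $(X,Y)$ by the recipe of the first step.

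The main obstacle is thus establishing the existence of such a Gaussian factorisation of $m-i$ with prescribed norms of the factors. Every rational prime divisor of $m^2+1$ is either $2$ or $\equiv 1 \pmod 4$, and hence is not inert in $\Z[i]$; moreover, if $p = \pi\bar\pi$ splits, then exactly one of $\pi, \bar\pi$ divides $m-i$ (otherwise $p$ would divide both $m$ and $-1$). Since $P \mid PQ = m^2+1$, one can then assemble $z_1 \mid m-i$ of norm exactly $P$ by selecting, for each rational prime $p \mid P$, the appropriate power of the Gaussian prime above $p$ that divides $m-i$, and setting $z_2 := (m-i)/z_1$. As a by-product this gives $\mathcal{N} \supseteq \{n : n^2-4\in\mathcal{S}\}$: for any such $n$, a representation $n^2-4 = X^2+Y^2$ may be arranged (by swapping $X$ and $Y$ if necessary, always possible by the parity observation) to have $X$ even, and the construction then produces a matrix in $\Gamma^n$.
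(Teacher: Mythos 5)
Your proof is correct. The forward inclusion is computed in essentially the same way as the paper (the paper finds $f(\gamma(i)) = \tfrac{2x_3 + ix_4}{n+2}$ via the auxiliary forms $y_1 = a+d,\ y_2 = b-c,\ y_3 = b+c,\ y_4 = a-d$ with $y_1^2+y_2^2=n+2$, $y_3^2+y_4^2=n-2$, and the identity $(y_1+iy_2)(y_3+iy_4) = 2x_3 + ix_4$; you obtain the same target $(2(ac+bd),\,a^2+b^2-c^2-d^2)$ directly through $P=ac+bd$, $Q=c^2+d^2$ and the Brahmagupta identity $P^2+1=(n-Q)Q$). Where you genuinely diverge is the reverse inclusion. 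The paper inverts the $y$-substitution: it factors the Gaussian integer $2u+iv$ of norm $n^2-4=(n-2)(n+2)$ into pieces $y_1+iy_2$, $y_3+iy_4$ of norms $n-2$ and $n+2$, and then must verify the parity conditions $y_1\equiv y_4$, $y_2\equiv y_3 \pmod 2$ case by case (using $n\pm 2\equiv 0$ or $1\pmod 4$ and the fact that $y_1y_3-y_2y_4$ is even, with a possible swap of $y_1$ and $y_2$). You instead set $m=X/2$, $P=(n+Y)/2$, $Q=(n-Y)/2$ and factor $m-i$, of norm $m^2+1=PQ$, into $z_1 z_2$ with $|z_1|^2=P$, $|z_2|^2=Q$, reading off $(a,b,c,d)$ from the real and imaginary parts of $z_1=a+bi$ and $z_2=c-di$. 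This has the pleasant feature that $a,b,c,d\in\Z$ for free, so the paper's parity analysis is not needed; and you make explicit the unique-factorisation-in-$\Z[i]$ argument (every odd prime dividing $m^2+1$ splits, exactly one Gaussian prime above it divides $m-i$, and the $2$-part is handled by $1+i$) that the paper invokes more implicitly when asserting that $y_1,\dots,y_4$ exist. Both routes are correct; yours trades the paper's parity casework for a slightly longer factorisation argument and arguably reads more cleanly.
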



\subsubsection{Non-equidistribution}

We can also show that there are subsequences $n_{i} \in \mathcal{N}$
tending to infinity in such a way that $|\Gamma^{n_i}| \to \infty$,
yet the angles $\{ \theta(\gamma) : \gamma \in \Gamma^{n_{i}} \}$ fail
to equidistribute. The constraint $|\Gamma^{n_i}| \to \infty$ is
natural, since equidistribution clearly fails along sequences so that
$|\Gamma^{n_{i}}| $ stays bounded (such sequences do exist, see e.g.
\cite[Proposition 2.1]{KLR}).


We will construct a wide family of weak-$*$ partial limits of
the sequence $\{\mu_{n}\}_{n \in \mathcal{N}}$ of probability measures
on $S^{1}$, by comparing the hyperbolic setting to its Euclidean
analogue, i.e. the case of the angular distribution of the points of
$\Z^{2}\subseteq\R^{2}$.
To describe the setup we need some further notation. Given $n \in \Z$, let
$$r(n) := |\{ (x,y)\in \Z^2 : x^{2} + y^{2} = n \}|$$ denote the
number of representations of $n$ as sum of two squares, and given $n
\in \Sc$, define a
probability measure $\nu_n$ on $S^{1}$ by
\begin{equation} \label{eq:nudef}
\nu_{n} :=
\frac{1}{r(n)}
\sum_{(x,y) \in \Z^2 : x^{2}+y^{2}=n}
\delta_{\arg(x+iy)},
\end{equation}
supported precisely on the angles corresponding to these
representations.



We say that a probability measure on $S^{1}$ is attainable from
lattice points on circles, or simply just {\em attainable},
(cf. \cite[Definition 1.1]{kurlbergwigman}) if it is a weak-$*$ partial
limit of $\{\nu_{n}\}_{n\in \Sc}$.
%
Our second principal result asserts that every attainable
measure, under a small perturbation, is a weak-$*$ partial limit of
$\{\mu_{n}\}_{n\in\Nc}$ in \eqref{eq:mun def}.

\begin{thm}
\label{thm:non-equidist}

There exists an absolute constant $C>0$, so that
for every probability measure $\nu$ on $S^{1}$ that is attainable,
there exists a sequence $\{n_{i}\} \subseteq \mathcal{N}$ and a
probability measure $\widetilde{\nu}$ supported on at most $C$
points on $S^{1}$, such that
$$
\mu_{n_i}\Rightarrow\nu \star \tilde{\nu}.
$$
Whether or not $\nu$ is supported on a finite number of points,
we may impose the further condition that
$|\Gamma^{n_{i}}|$ grows as $n_i \rightarrow \infty$.
\end{thm}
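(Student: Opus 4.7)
The plan is to reduce the problem to its Euclidean analogue via Proposition~\ref{prop:keyprop}: it suffices to exhibit a sequence $\{n_{j}\}\subseteq\Nc$ along which the angular distribution of lattice points $(x,y)\in\Z^{2}$ satisfying $x^{2}+y^{2}=n_{j}^{2}-4$ and $x\equiv 0\tmod 2)$ converges weak-$*$ to $\nu\star\tilde{\nu}$ for some $\tilde{\nu}$ supported on a bounded number of points. By attainability of $\nu$, fix $\{m_{j}\}\subseteq\Sc$ with $\nu_{m_{j}}\Rightarrow\nu$. The strategy is to ``promote'' each $m_{j}$ to an element of the form $n_{j}^{2}-4$ by introducing an auxiliary multiplicative factor of bounded complexity.

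Concretely, set $n_{j}:=m_{j}t_{j}+2$ for a suitable integer $t_{j}\geq 1$, producing the factorization
\[
n_{j}^{2}-4=m_{j}t_{j}\cdot(m_{j}t_{j}+4).
\]
Since multiplication in $\Z[i]$ adds arguments, the angular measures are multiplicative under coprime factorization: if $A,B\in\Sc$ with $\gcd(A,B)=1$, then $\nu_{AB}=\nu_{A}\star\nu_{B}$. Applying this to $A=m_{j}$ and $B=t_{j}(m_{j}t_{j}+4)$, having arranged $\gcd(m_{j},t_{j})=1$ and observing $\gcd(m_{j}t_{j},m_{j}t_{j}+4)\mid 4$, yields $\nu_{n_{j}^{2}-4}=\nu_{m_{j}}\star\tilde{\nu}_{j}$ with $\tilde{\nu}_{j}$ supported on at most $4\cdot 2^{\Omega(B)}$ points. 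Choosing the parity of $t_{j}$ so that $n_{j}$ is even forces $n_{j}^{2}-4\equiv 0\tmod 4)$, so both coordinates of every representation are automatically even, the restriction in~\eqref{eq:even-x-coordinate-circle} becomes vacuous, and $\mu_{n_{j}}=\nu_{n_{j}^{2}-4}$.

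The main obstacle is the sieve step: produce $t_{j}$ for which $B=t_{j}(m_{j}t_{j}+4)$ lies in $\Sc$ with $\Omega(B)\leq C_{0}$ for an absolute $C_{0}$. This is a twin almost-prime problem requiring that both $t_{j}$ and $m_{j}t_{j}+4$ be products of few primes in $\{2\}\cup\{p:p\equiv 1\tmod 4)\}$, the latter membership enforced by restricting $t_{j}$ to a fixed residue class modulo small primes. A standard weighted linear sieve (Chen--Iwaniec style) supplies infinitely many such $t_{j}$ for each $m_{j}$. A diagonal extraction then yields $\tilde{\nu}_{j}\Rightarrow\tilde{\nu}$ with $\tilde{\nu}$ supported on at most $C:=2^{C_{0}+O(1)}$ points, so $\mu_{n_{j}}\Rightarrow\nu\star\tilde{\nu}$. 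The growing multiplicity $|\Gamma^{n_{j}}|\to\infty$ holds automatically when $\nu$ has infinite support, since then $r(m_{j})\to\infty$; when $\nu$ is supported on finitely many points, we absorb an additional factor $2^{k_{j}}$ with $k_{j}\to\infty$ into $t_{j}$, which inflates $r(n_{j}^{2}-4)$ while contributing only angles at multiples of $\pi/4$ (at most $8$ extra atoms to $\tilde{\nu}$).
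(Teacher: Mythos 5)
Your structural reduction is the right one and matches the paper's in spirit: pass to the Euclidean picture via Proposition~\ref{prop:keyprop}, use the coprime multiplicativity $\nu_{AB}=\nu_A\star\nu_B$ (equivalently, multiplicativity of the Fourier coefficients $u_k$), and plant the ``target'' $m_j$ as a coprime divisor of $n_j^2-4$ while forcing the complementary cofactor to have bounded complexity. Your algebra $n_j^2-4=m_jt_j(m_jt_j+4)$ and the observation that choosing $n_j\equiv 2\tmod 4)$ makes the parity constraint in~\eqref{eq:even-x-coordinate-circle} vacuous are both fine. However, there are two genuine gaps.

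First, the sieve step is not a ``standard weighted linear sieve.'' You need $t_j$ (coprime to $m_j$, and with $t_j\equiv 0\tmod 4)$) such that \emph{both} $t_j$ and $m_jt_j+4$ are sums of two squares with $\Omega$ bounded. The sum-of-two-squares condition has half-dimensional character, so this is a twin half-dimensional sieve problem for a pair of linear forms, with the extra complication that the coefficient $m_j\to\infty$. This is precisely the content of \cite[Proposition~2.1]{KLR} which the paper's Lemma~\ref{lem:klr} quotes, and which in turn rests on Huxley--Iwaniec \cite{huxley-iwaniec} (Bombieri's theorem in short intervals) and Friedlander--Iwaniec \cite[Theorem 14.8]{cribro}. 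Citing ``Chen--Iwaniec style'' without these ingredients does not close the gap; this step is the heart of the proof.

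Second, the growing-multiplicity argument is incorrect. Since $2$ ramifies in $\Z[i]$ (indeed $(2)=(1+i)^2$), one has $r(2^{k})=4$ for every $k\geq 0$; consequently absorbing a factor $2^{k_j}$ into $t_j$ does \emph{not} inflate $r(n_j^{2}-4)$ at all. To get $|\Gamma^{n_i}|\to\infty$ while keeping $\widetilde\nu$ essentially fixed, you need to force $n_j^{2}-4$ to be divisible by a product $Q_j$ of a \emph{growing number of distinct} primes $p\equiv 1\tmod 4)$ whose angles $\theta_p$ lie very near multiples of $\frac{\pi}{2}$ (equivalently $\vartheta_p$ small): then $r(Q_j)=2^{\omega(Q_j)+1}\to\infty$ while $w_k(Q_j)=u_n^{k/2}+o(1)$ contributes only a controlled sign. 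This is exactly what the factor $Q$ in Lemma~\ref{lem:klr} supplies, and it must also be built into the sieve, so the two gaps are really one and the same missing input. Once these are supplied, your approach reduces to the paper's, with the cosmetic difference that you place $m_j$ in $n-2$ and work with even $n$ (avoiding the $(-1)^{k/2}$ and $q_1q_2$ bookkeeping the paper uses to handle odd $n$, a mild simplification).
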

In particular there exists a sequence $\{n_{i}\}\subseteq \mathcal{N}$
such that $|\Gamma^{n_i}| \to \infty$, and the weak-$*$ limit
$\mu = \lim_{i \to \infty}\mu_{n_{i}}$ is highly singular in the sense
of having support on a finite number of points, see
Figure~\ref{fig:singular} for illustration.
\begin{figure}[h]
  \centering
\includegraphics[width=8cm]{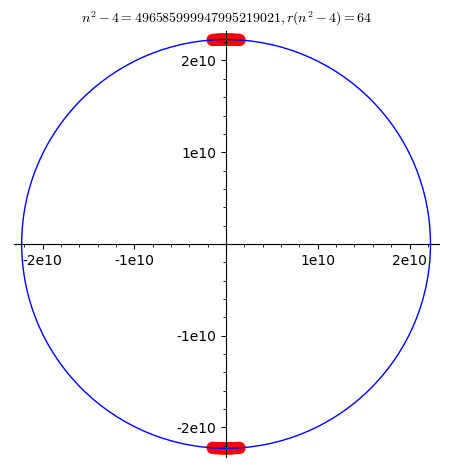}
\caption{Points $(x,y) \in \Z^2$ s.t. $x^{2}+y^{2}=n^{2}-4$ and $x$ even.}
  \label{fig:singular}
\end{figure}

Further, there are
hyperbolic limiting measures $\mu = \nu \star \tilde{\nu}$ that are
singular continuous --- for example, we may take $\nu$ to be a measure
of Cantor type with arbitrary small support
(cf. \cite[\S4.3]{kurlbergwigman}).

\subsubsection{Breaking symmetry}

We remark that for $n \equiv 0 \tmod 4)$ the parity condition
on the $x$-coordinate in \eqref{eq:even-x-coordinate-circle}
is
illusory: {\em both} $x$ and $y$ must be even if
$x^{2}+y^{2} = n^{2}-4 \equiv 0 \tmod 4)$, and in this case the angles
are obtained from {all} lattice points on the circle of radius
$\sqrt{n^{2}-4}$.
%
In particular, any measure \eqref{eq:mun def} (or any weak-$*$ limit
along even $n \in \mathcal{N}$) must be invariant under
$(x,y) \to (x,-y)$ as well as rotation by $\frac{\pi}{2}$.

However,
for $n$ odd, the parity condition breaks the quarter rotation
symmetry, yet the limiting measures along ``generic'' odd $n$'s are
equidistributed by Theorem~\ref{thm:equidistributed} --- hence invariant
under all rotations.  A natural question is whether or not there exist
limit measures of $\mu_{n}$, along subsequences $n_i$ so that $|\Gamma^{n_i}|$ tends to
infinity, that are {\em not} invariant under rotation by
$\frac{\pi}{2}$. This is indeed the case.


\begin{thm} \label{thm:break}
There exists weak-$*$ limit points of $\{\mu_{n}\}_{n \in \mathcal
  N}$, along subsequences so that $|\Gamma^{n}|$ tends to infinity, which
are asymmetric in the sense of not
being invariant under rotation by $\frac{\pi}{2}$.
In particular, the weak-$*$ limit points of
$\{\mu_{n}\}_{n \in \mathcal N}$ do not coincide with those of
$\{\nu_n\}_{n\in\Sc}$.
\end{thm}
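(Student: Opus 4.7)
The plan is to exhibit weak-$*$ limit points of $\{\mu_{n}\}$ along odd $n_{i} \in \Nc$ with $|\Gamma^{n_{i}}| \to \infty$ whose second Fourier coefficient $\widehat{\mu_{n_{i}}}(2)$ does not vanish in the limit. Since any $\pi/2$-invariant probability measure $\mu$ on $S^{1}$ satisfies $\widehat{\mu}(k)=0$ for all $k \not\equiv 0 \pmod 4$, producing such a weak-$*$ limit suffices.

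By Proposition~\ref{prop:keyprop}, for odd $n \in \Nc$ the measure $\mu_{n}$ is uniform on $\{\arg(x+iy): x^{2}+y^{2}=n^{2}-4,\ x \equiv 0 \pmod 2\}$. Since $n^{2}-4$ is odd, the quarter-turn $(x,y)\mapsto(-y,x)$ exchanges the $x$-even lattice points on this circle with the disjoint $x$-odd complement, so $\mu_{n}$ and $R_{\pi/2}\mu_{n}$ are supported on complementary halves. Using $e^{-2i\arg(x+iy)}=(x-iy)^{2}/(x^{2}+y^{2})$ and cancelling the imaginary part via $(x,y)\mapsto(x,-y)$:
\[
\widehat{\mu_{n}}(2) = \frac{1}{|\Gamma^{n}|(n^{2}-4)}\sum_{\substack{x^{2}+y^{2}=n^{2}-4 \\ x \equiv 0 \pmod 2}}(x^{2}-y^{2}).
\]

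The sequence is constructed using the Pell identity $L_{2m}^{2}-5F_{2m}^{2}=4$ between Lucas and Fibonacci numbers. Take $n_{i}=L_{2m_{i}}$ with $m_{i} \not\equiv 0 \pmod 3$ and $5 \nmid m_{i}$ (forcing $n_{i}$ odd and $5 \nmid F_{2m_{i}}$), so $n_{i}^{2}-4 = 5F_{2m_{i}}^{2}$; passing to a subsequence along which $F_{2m_{i}}$ has unboundedly many distinct prime divisors $\equiv 1 \pmod 4$ (available via standard asymptotics on prime factors of Fibonacci numbers) ensures $|\Gamma^{n_{i}}| \to \infty$. Introducing the notation $\mu_{M}^{(e)}$ for the uniform probability measure on $\{\arg(x+iy): x^{2}+y^{2}=M,\ x \text{ even}\}$ (so $\mu_{n_{i}} = \mu_{n_{i}^{2}-4}^{(e)}$), the Gaussian factorization $5=(2+i)(2-i)$ gives, for odd $M$ coprime to $5$, the convolution identity
\[
\mu_{5M}^{(e)} = R_{\pi/2}\mu_{M}^{(e)} \star \tfrac{1}{2}(\delta_{\theta_{5}}+\delta_{-\theta_{5}}), \qquad \theta_{5} = \arctan(1/2),
\]
whose Fourier transform at $2$, using $\cos 2\theta_{5} = 3/5$, yields $\widehat{\mu_{n_{i}}}(2) = -\tfrac{3}{5}\widehat{\mu_{F_{2m_{i}}^{2}}^{(e)}}(2)$.

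The main obstacle is to force $|\widehat{\mu_{F_{2m_{i}}^{2}}^{(e)}}(2)|$ to stay bounded away from $0$ along an infinite subsequence. Parameterizing Gaussians of norm $F^{2}$ by their exponents in $\pi_{p}^{e_{p}}\bar{\pi}_{p}^{2b_{p}-e_{p}}$ (one such factor per prime $p \equiv 1 \pmod 4$ dividing $F$) and collecting terms, one computes for squarefree $F$
\[
\widehat{\mu_{F^{2}}^{(e)}}(2) = -\prod_{p\mid F,\ p \equiv 1 \pmod 4}\frac{1+2\cos 4\theta_{p}}{3},
\]
a quantity whose generic logarithmic decay rate is $-\log 3$ per prime (since $\int_{0}^{2\pi}\log|1+2\cos t|\,dt = 0$ gives geometric mean $1$ for $|1+2\cos t|$). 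This obstruction is resolved by selecting a sparse subsequence of $m_{i}$ for which the prime divisors of $F_{2m_{i}}$ lie in angular sectors avoiding the zeros of $1+2\cos 4\theta_{p}$ and yield products comparable to their maximum --- for instance by restricting $F_{2m_{i}}$ to have a bounded number of prime divisors $\equiv 1 \pmod 4$ whose $\theta_{p}$ are suitably concentrated near $0$. By weak-$*$ compactness, any subsequential limit $\mu$ of $\{\mu_{n_{i}}\}$ then has $\widehat{\mu}(2) \ne 0$ and so is not invariant under rotation by $\pi/2$.
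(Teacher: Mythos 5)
Your opening computations are correct: the identification $\widehat{\mu_{n}}(2)\propto\frac{1}{n^{2}-4}\sum_{x\text{ even}}(x^{2}-y^{2})$, the Lucas--Fibonacci identity $L_{2m}^{2}-4=5F_{2m}^{2}$, the convolution identity for $\mu_{5M}^{(e)}$ (and the resulting factor $-3/5$ via $\cos 2\theta_{5}=3/5$), and the product formula $\widehat{\mu_{F^{2}}^{(e)}}(2)=-\prod_{p\mid F}\frac{1+2\cos 4\theta_{p}}{3}$ for squarefree $F$ are all consistent with the paper's multiplicative setup (cf. the identity $v_{2}(nm)=-v_{2}(n)v_{2}(m)$ for coprime odd $n,m$). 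The idea of feeding $n=L_{2m}$ into the machinery to get $n^{2}-4\in\Sc$ for free is clever.

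However, the last paragraph contains the whole difficulty and it is both unproven and internally inconsistent. To get $|\Gamma^{n_{i}}|\to\infty$ you \emph{must} let the number of prime divisors of $F_{2m_{i}}$ that are $\equiv 1\ (\mathrm{mod}\ 4)$ grow, since $|\Gamma^{n_{i}}|\asymp 3^{\omega_{1}(F_{2m_{i}})}$ when $F_{2m_i}$ is squarefree; but you then propose to "restrict $F_{2m_{i}}$ to have a bounded number of prime divisors $\equiv 1\ (\mathrm{mod}\ 4)$," which forces $|\Gamma^{n_{i}}|$ to stay bounded --- exactly the case the theorem rules out. Even setting this aside, the needed input is that there are infinitely many $m$ (in a congruence class, with $F_{2m}$ squarefree) such that \emph{every} Gaussian prime factor of $F_{2m}$ has its angle $\theta_{p}$ lying in a prescribed small sector near $0$, with the angles shrinking fast enough that $\prod_{p\mid F_{2m}}\frac{1+2\cos 4\theta_{p}}{3}$ is bounded below. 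Nothing remotely like this is known: controlling the splitting/angles of Gaussian prime factors of Fibonacci numbers (or even just squarefreeness of $F_{n}$) is far out of reach. You cannot sieve within the Fibonacci sequence the way you can sieve within $\{n\le x\}$.

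The paper's proof circumvents exactly this: rather than fix a rigid parametric family, it sieves over all $n\le x$. A lower-bound sieve (via \cite[Proposition~2.1]{KLR}, building on Huxley--Iwaniec) produces many $n$ with $n-2=p_{1}p_{2}$ and $n+2$ a sum of two squares with $\Omega(n+2)\le 1/\eta_{2}$; an upper-bound beta-sieve (Lemma~\ref{lem:thirdlemma}, via the Fundamental Lemma and \cite[Prop.~A.1]{KLR} together with Gaussian primes in sectors) shows that few of these $n$ have $n^{2}-4$ divisible by a prime with $\theta_{p}$ near a zero of the relevant Chebyshev factor. This gives $|W_{2}(n^{2}-4)|\gg\delta^{1/\eta_{2}}$ uniformly, and $|\Gamma^{n}|\to\infty$ is then arranged by additionally requiring $Q\mid n-2$ for a controlled product $Q$ of primes with small $\vartheta_{p}$ --- a constraint the sieve can absorb, but the Fibonacci sequence cannot. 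Your proposal reproduces the paper's Fourier-coefficient criterion for asymmetry but replaces the sieve with an unproven and in fact self-contradictory claim about Fibonacci factorizations, so it does not constitute a proof.
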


\subsection{Comparison with the Euclidean case}
It is natural to compare our results with those for the case of
lattice points of $\mathbb{Z}^2$ inside $\mathbb{R}^2$. In that case, a
more precise description of the (aforementioned ``attainable")
limit measures can be given; in fact,
a partial classification of the first two nontrivial Fourier
coefficients of the limit measures was done by Kurlberg and Wigman
\cite{kurlbergwigman}, and Sartori ~\cite{sartori}, building upon the
pioneering works of Cilleruelo \cite{cilleruelo}, Katai-Környei
\cite{katai-kornyei-lattice-points-on-circles} and Erd{\H o}s-Hall
\cite{erdos-hall}; our discrepancy bound can be viewed as a thin
subset analog, of comparable strength, to the discrepancy bounds in
\cite{erdos-hall,katai-kornyei-lattice-points-on-circles}. In the Euclidean
setting the set of limit measures turns
out to have a surprising {\lq fractal structure\rq}; an analogous statement for the orbit points
of the modular group would follow from plausible conjectures related
to twin primes in arithmetic progressions.

\subsection{Discussion on hyperbolic lattice point counting}

The study of the orbit under the action of Fuchsian groups on the
hyperbolic plane with the use of spectral theory goes back to Delsarte
\cite{delsarte} (also see
\cite[pp. 829-845]{delsarte-ouvres}), Huber \cite{huber},
Selberg \cite{selberg} (with the best error term), and Patterson
\cite{patterson}. Nicholls \cite{nicholls}, using ergodic theory, worked
out the case of the $n$-dimensional space, and G\"unther \cite{gunther},
using spectral theory, generalized Selberg's result for the case of
rank one spaces. Refined results in the hyperbolic lattice point
problem have been extensively studied, such as the angle distribution
\cite{boca,good,marklofvinogradov,risagerrudnick,risagertruelsen,truelsen} and the
pair correlation density \cite{bocaetal,bocaetal2,marklofvinogradov,kelmer,risagersodergren}.
Second moment estimates of the error term and their applications
to the study of quadratic forms and correlation sums of $r(n)$ were addressed by
Chamizo \cite{chamizo1,chamizo2} and Iwaniec \cite{iwaniec}. Further, Friedlander and Iwaniec
\cite{friedlanderiwaniec} studied a modified {\lq hyperbolic\rq} prime
number theorem. Although coming from a different problem, their work
is more {\lq arithmetic\rq} in flavor rather than spectral, and it is
closer to our own approach.

\vspace{2mm}


Our investigation was inspired by the results of Marklof and Vinogradov
\cite{marklofvinogradov}, who resolved the local statistics of lattice points lying
on spherical shells of fixed width, projected on the unit sphere, including the correlation functions
of arbitrary order,
finer than their angular equidistribution due to Nicholls \cite{nicholls}.
Namely, for $\gamma\in \Gamma$ and a fixed point $w \in \mathbb{H}$, denote by
$\phi (\gamma w) =\phi_i (\gamma w) $ the intersection of the unit
hyperbolic circle centered at $i \in \mathbb{H}$ and the semi-infinite
geodesic starting at $i$ and containing $\gamma w$. Fix $s\ge 0$, and, for $t>s$ large, consider the projection
\begin{align*}
\mathcal{P}_w (s, t) = \{\phi (\gamma w): \gamma \in \Gamma / \Gamma_w ,\, t-s \leq \rho(\gamma w, i) <t   \}
\end{align*}
of the emerging spherical shell to the unit circle.
Here $\Gamma_w$ is the stabilizer of the point $w \in \mathbb{H}$,
which is a finite cyclic group \cite[Ch.~2]{iwaniec}. These notions
easily extend to the case of a lattice $\Gamma$ acting
discontinuously on the $n$-dimensional hyperbolic space $\mathbb{H}^n$
for $n \geq 2$.

In this context the main research line has been on understanding the
direction distribution in $\mathcal{P}_w (s, t)$ as $t\rightarrow\infty$, with $s$ \textit{fixed}.
By work of Nicholls \cite{nicholls}, asymptotics for the counting
function is known:
\begin{align*}
\left| \mathcal{P}_w (s, t)\right| \sim c_n \left( 1- e^{-(n-1)s} \right) \cdot e^{(n-1)t},
\end{align*}
for $t \to \infty$ and $s$ fixed (here $c_n$ is an explicit constant
depending on $n$ and $\Gamma$). Further, by
~\cite[Theorem 2]{nicholls} it easily follows that
the angles in $\mathcal{P}_w (s, t)$ asymptotically equidistribute:
for every $\mathcal{A} \subset \mathbb{S}^{n-1}$ with
measure zero boundary and $s$ fixed we have
\begin{align*}
\left| \mathcal{P}_w (s, t) \cap \mathcal{A} \right| \sim \frac{\vol( \mathcal{A})}{\vol(\mathbb{S}^{n-1})} \cdot \left| \mathcal{P}_w (s, t)\right|
\end{align*}
as $t \to \infty$.

\vspace{2mm}

Our method differs from the usual attacks on
hyperbolic lattice counting problem. The equivalence between
hyperbolic angular distribution and Euclidean angular distribution
allows us to use arithmetic tools (sieves) rather than results from
spectral or ergodic theory.

\subsection{Acknowledgments}

D.C. was supported by the Labex CEMPI (ANR-11-LABX-0007-01) and is
currently supported by an IdEx postdoctoral fellowship at IBM,
University of Bordeaux. P.K. was partially supported by the Swedish
Research Council (2016-03701). S.L. is partially supported by EPSRC
Standard Grant EP/T028343/1.  The research leading to these results
has received funding from the European Research Council under the
European Union's Seventh Framework Programme (FP7/2007-2013), ERC
grant agreement n$^{\text{o}}$ 335141 (D.C. and I.W.). We would like
to thank D. R. Heath-Brown and J. Marklof for discussions that
inspired the research presented in this manuscript, and Z. Rudnick for
comments on an earlier version of this manuscript and a very helpful
discussion regarding A. Good's results.

\section{Lattice points in $\mathbb{H}$ and $\R^{2}$: proof of Proposition \ref{prop:keyprop}}
\label{sec:cribro-details}

\subsection{Lattice points on hyperbolic and Euclidean circles}
For $\gamma = \left(\begin{smallmatrix}  a&b\\ c&d \end{smallmatrix} \right)
\in \Gamma$ and $i \in \{1,2,3,4\}$, define
the quadratic
forms $x_{i} = x_{i}(\gamma)$ by
\begin{equation}
  \label{eq:x-forms}
x_{1} = a^{2}+b^{2}, \quad
x_{2} = c^{2}+d^{2}, \quad
x_{3} = ac + bd, \quad
x_{4} = x_1-x_{2};
\end{equation}
we then find that $\gamma(i) = (x_{3}+i)/x_{2} \in \mathbb{H}$, and,
using that $x_3^2 = x_1 x_2 - 1$, a short calculation gives
$$
f(\gamma(i)) = \frac{ 2 x_{3} + i x_{4}}{ x_{1}+x_{2} + 2} \in \mathbb{D}.
$$

Thus, with $n = \| \gamma \|^{2}= a^{2}+b^{2}+c^{2}+d^{2}\in \N$, the set of hyperbolic
angles is given by the angles (in $\R^2$) of the set of points
\begin{align*}
L_{n} :=
\left\{ ( 2 (ac+bd)  , a^{2}+b^{2}-c^{2}-d^{2} ) :
a^{2}+b^{2}+c^{2}+d^{2} = n,  \begin{pmatrix} a & b \\ c & d  \end{pmatrix}
\in \Gamma \right\}.
\end{align*}
We observe that the set $L = \bigcup\limits_{n=1}^{\infty} L_n$ is not a sublattice
of $\mathbb{Z}^2$, and, in what follows, we will show that
$L_{n}$ can be identified with the set
\begin{align*}
\mathcal L_n=
\left \{ (2x_{3}, x_{4}) : x_{3},x_{4}\in \Z, 4x_{3}^{2} + x_{4}^{2} =
  n^{2}-4 \right\}.
\end{align*}

To describe this identification in more detail, let $a,b,c,d \in \Z$
satisfy
\begin{equation*} \notag
a^{2}+b^{2}+c^{2}+d^{2} = n,
\quad ad-bc = 1.
\end{equation*}
Then, following \cite{friedlanderiwaniec} or
\cite[Chapter~14.7]{cribro}\footnote{See
  \cite[Eqs. (14.55) and (14.56)]{cribro}, though there
appears to be a misprint: $y_{2}$ and $y_{4}$ should be interchanged.}
(see also \cite[Ch.12]{iwaniec} and \cite{chamizo1}), we let
\begin{equation}\label{eq:ydef}
y_{1} = a+d, \quad
y_{2} = b-c, \quad
y_{3} = b+c, \quad
y_{4} = a-d.
\end{equation}
It is then simple to verify that
$$
y_{1}^{2} + y_{2}^{2} =
a^{2}+b^{2}+c^{2}+d^{2} + 2(ad-bc)
= n +2
$$
and
$$
y_{3}^{2} + y_{4}^{2} =
a^{2}+b^{2}+c^{2}+d^{2} - 2(ad-bc)
= n - 2,
$$
indicating a correspondence between hyperbolic lattice points, and
Euclidean lattice points on {\em two} circles.
With $\mathcal{N}$
defined as in \eqref{definesetN}, and recalling that
$\mathcal{S} = \{ n \in \mathbb{Z} : r(n) > 0\}$, the above demonstrates that
$\mathcal{N} \subseteq \{ n \in \Z : n \pm 2 \in \mathcal{S} \}$.

\subsection{The parity conditions}\label{sec:parity}
Conversely, we will now show that $\{ n \in \Z : n \pm 2 \in \mathcal{S} \} \subseteq \mathcal{N}$.
This gives rise to certain parity conditions that must be taken into account.

First, note that if $n \in \{ n \in \Z : n \pm 2 \in \mathcal{S} \}$
 then $n+2 \not \equiv 3 \tmod 4) $ (recall that $k \not \in \mathcal{S}$ for
any $k \equiv 3 \tmod 4)$).
Also, if $n+2 \equiv 2 \tmod 4)$, write $n = 4m$;
as $n+2=2(2m+1)$ and $n-2 = 2(2m-1)$ we find that
$n+2 \not \in \mathcal{S} $ or $n-2 \not \in \mathcal{S}$. It remains to consider integers $n$ with $n+2 \equiv 0,1 \tmod 4)$. Let $y_1,y_2,y_3,y_4 \in \mathbb Z$ satisfy
\begin{equation}
\label{eq:y1^2+y2^2=n-2,n+2}
y_1^2+y_2^2=n+2 \qquad \text{ and } \qquad y_3^2+y_4^2=n-2.
\end{equation}
Define
\begin{equation} \label{eq:abcddef}
a = (y_{1}+y_{4})/2, \quad
b = (y_{2}+y_{3})/2, \quad
c = (y_{3}-y_{2})/2, \quad
d = (y_{1}-y_{4})/2.
\end{equation}
Clearly, $a^2+b^2+c^2+d^2=n$ and $ad-bc=1$. We also need that $a,b,c,d \in \mathbb Z$, i.e., that
$y_{1} \equiv y_{4} \tmod 2)$, and $y_{2} \equiv y_{3} \tmod
2)$.

\vspace{2mm}

To analyze the implications, consider first the case
$n+2 \equiv n-2 \equiv 0 \tmod 4)$. We find that, by \eqref{eq:y1^2+y2^2=n-2,n+2},
$y_1,y_2, y_3, y_{4}$ all must be even, and the parity condition is
satisfied automatically.


Otherwise, consider the case $n+2 \equiv n-2 \equiv 1 \tmod 4)$. By using \eqref{eq:y1^2+y2^2=n-2,n+2} again, we find that
$y_{1},y_{2}$ must be of opposite parity, and the same holds for
$y_{3},y_{4}$. The parity conditions $y_{1} \equiv y_{4} \tmod 2)$ and
$y_{2} \equiv y_{3} \tmod 2)$ are now nontrivial, and, by symmetry, only half the
solutions on the two circles yield hyperbolic lattice points. The parity condition can be ensured, if necessary, by
interchanging $y_{1}$ and $y_{2}$.

\subsection{Proof of Proposition \ref{prop:keyprop}}

We next relate the distribution of hyperbolic angles to the
distribution of angles of Euclidean lattice points, with even
$x$-coordinates, on {\em one} circle.

\begin{proof}
We first show $L_n \subseteq \mathcal L_n$.
Let $y_{1}, \ldots, y_{4}$ be as in \eqref{eq:ydef}. We find that
\begin{eqnarray}
(y_{1}+iy_{2})(y_{3}+iy_{4}) &=&
y_{1} y_{3} - y_{2} y_{4} + i( y_{1} y_{4} + y_{2} y_{3}) \nonumber \\
&=&
(a+d)(b+c) - (b-c)(a-d) \nonumber \\
&&+ i( (a+d)(a-d) + (b-c)(b+c)) \label{eq:factor} \\
&=&
2(ac+bd) + i ( a^{2}+b^{2}-c^{2}-d^{2} ) \nonumber \\
&=&
2x_{3} + i x_{4} \nonumber
\end{eqnarray}
with $x_{3},x_{4}$ as in (\ref{eq:x-forms}); it is straightforward to
check that $2x_{3}^{2} + x_{4}^{2} = n^{2}-4$, so $L_n \subseteq \mathcal L_n$.

Let $(2u,v) \in \mathbb Z^2$ with $4u^2+v^2=n^2-4$.
Since $(n-2,n+2)|4$,
we have that $n^2-4 \in \mathcal S$ if and only if $n\pm 2 \in \mathcal S$. It follows that there exist integers $y_1,y_2,y_3,y_4$ with
\[
y_1^2+y_2^2=n-2, \qquad y_3^2+y_4^2=n+2,
\]
and $(y_1+iy_2)(y_3+iy_4)=2u+iv$. In particular, $y_1y_3-y_2y_4$ must be even. Recall from Section \ref{sec:parity} that we only need to
consider $n + 2\equiv n-2 \equiv 0,1 \tmod 4)$. Hence, since $y_1y_3-y_2y_4$ is even, the parity
conditions $y_1 \equiv y_4 \tmod 2)$ and $y_2 \equiv y_3 \tmod 2)$ hold (interchanging $y_1$ and $y_2$ if needed). Thus, for $a,b,c,d$ as in \eqref{eq:abcddef} we have that $a^2+b^2+c^2+d^2=n$, $ad-bc=1$ and $a,b,c,d \in \mathbb Z$. By repeating the calculation performed
in \eqref{eq:factor} we have $2u=2(ac+bd)$ and $v=a^2+b^2-c^2-d^2$. Therefore, $\mathcal L_n \subseteq L_n$.

\end{proof}

\section{Equidistribution discrepancy estimate: Proof of Theorem \ref{thm:equidistributed}}

\subsection{Auxiliary notation}

For $\vec x \in \mathbb R^2$ let $\theta(\vec
x)$ denote the angle between $\vec x$ and the positive $x$-axis. Also,
let $1_S$ denote the indicator function of a set $S$. We also write
\begin{align*}
\omega_1(n)=|\{ p|n : p \equiv 1 \tmod 4)\}|, \quad \text{ and } \quad
\Omega_1(n)=\sum_{\substack{p^a || n \\ p \equiv 1 \tmod 4)}} a,
\end{align*}
where $p^a || n$, means $p^a |n$ and $p^{a+1} \nmid n$. Further, let
\begin{equation}\label{eq:rstardef}
r^{\star}(n)=\sum_{\substack{\vec x=(x,y) \in \mathbb Z^2 \\ |\vec x|^2 =n \\ x \text{ is even}}} 1.
\end{equation}
If $n$ is odd then $r^{\star}(n)=\tfrac12r(n)$ since for every $\vec x=(x,y)$ with $|\vec x|^2=n$ either $x$ or $y$ is even, so by symmetry exactly one-half of the lattice points on the circle of radius $\sqrt{n}$ will have even $x$-coordinate.  Next, consider $n=2m$ with $m$ odd then $r^{\star}(n)=0$ since for $|\vec x|^2=2m$ there exists $a,b\in \mathbb Z$ with $x+iy=(1+i)(a+ib)=(a-b)+i(b+a)$ and either $a$ or $b$ is even ($m$ is odd) so $x$ and $y$ are both odd. Finally, if $n=4m$ and $x^2+y^2=4m$, then $x+iy=2(a+ib)$ for some $a,b \in \mathbb Z$, so $x,y$ are even and in this case $r^{\star}(n)=r(n)$. In summary,
\begin{equation} \label{eq:rstar}
r^{\star}(n)=
\begin{cases}
\tfrac12 \cdot r(n) & \text{ if } n \equiv \pm 1 \pamod 4, \\
0 & \text{ if } n \equiv 2 \pamod 4,\\
r(n) & \text{ if } n \equiv 0 \pamod 4.
\end{cases}
\end{equation}
 Additionally, let
\begin{align*}
\tau_{\ell}(n)=\sum_{n_1 \cdots n_{\ell}=n} 1
\end{align*}
denote the $\ell$-fold divisor function, and define $b(n)=1$ if $n$ is a sum of two squares and to be equal to zero otherwise (i.e. $b(\cdot)$ is the characteristic function of $\mathcal{S} = \{ n = \square +  \square\}$).

\begin{definition}[Primary numbers]
A Gaussian integer $a + ib \in \mathbb Z[i]$ is \textit{primary} if $a + ib \equiv 1 \tmod{ 2(1 + i)})$.
Equivalently, $a+ib$ is primary if
\[
\begin{split}
    b \equiv& 0 \tmod 2) \\
    a\equiv& 1-b \tmod 4).
\end{split}
\]
\end{definition}

It is well-known that a product of two primary Gaussian integers is primary,
and each primary Gaussian integer can be factored uniquely, up to reordering, into primary Gaussian
primes, see e.g. \cite[p. 54]{iwaniec-kowalski}. For a
prime $p \equiv 1 \tmod 4)$, let $\pi$ be the
primary Gaussian prime with norm $N\pi=p$ and $\Im (\pi)>0$. We define
\begin{equation} \label{eq:thetadef}
\theta_{p}=\tmop{Arg}(\pi)
\end{equation}
where $\tmop{Arg}(\cdot)$ is the principal value of the argument function.

\subsection{Proof of Theorem \ref{thm:equidistributed}}

By Proposition \ref{prop:keyprop},
Theorem~\ref{thm:equidistributed} is an immediate consequence of the
following discrepancy bound for integral lattice points, having
even $x$-coordinate, on circles with radii $\sqrt{n^2-4}$.
\begin{prop} \label{prop:1}
Along a density one subsequence of integers $n$ such that $n^2-4$ is a sum of two squares we have that $r^{\star}(n^2-4) \asymp (\log n)^{\log 2 \pm o(1)}$ and moreover
\begin{equation} \label{eq:discrepancy}
\sup\limits_{I \subseteq S^1} \bigg| \frac{1}{r^{\star}(n^2-4)} \sum_{\substack{\vec x=(x,y) \in \mathbb Z^2 \\ |\vec x|^2 =n^2-4 \\ x \text{ is even}}} 1_{I}(\theta(\vec x))-\frac{|I|}{2\pi} \bigg| \ll  \frac{1}{r^{\star}(n^2-4)^{\vartheta-o(1)}},
\end{equation}
where $\vartheta=\log(\pi/2)/\log 2=0.651496129\ldots.$
\end{prop}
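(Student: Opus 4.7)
The plan is to prove \eqref{eq:discrepancy} via the Erd\H{o}s--Tur\'an inequality, which reduces matters to bounding the Fourier coefficients of the measure
\[
\mu_n^\star := \frac{1}{r^\star(n^2-4)} \sum_{\substack{\vec x \in \Z^2 \\ |\vec x|^2 = n^2-4 \\ x \,\text{even}}} \delta_{\theta(\vec x)}
\]
uniformly for $1 \le |k| \le K := r^\star(n^2-4)^{\vartheta+o(1)}$ by $r^\star(n^2-4)^{-\vartheta+o(1)}$ along a density-one subset of $\mathcal N(x)$. Two auxiliary inputs come first: since $\gcd(n-2,n+2) \mid 4$, membership $n \in \mathcal N$ essentially reduces to $n \pm 2 \in \mathcal S$, and Iwaniec-type sieving for simultaneous sums of two squares gives $|\mathcal N(x)| \asymp x/\log x$; a Tur\'an--Kubilius second-moment estimate for the additive function $\omega_1(n-2) + \omega_1(n+2)$ yields $\omega_1(n^2-4) = (1+o(1))\log\log n$ outside a relatively sparse subset of $\mathcal N$, whence $r^\star(n^2-4) = 2^{\omega_1(n^2-4) + O(1)} = (\log n)^{\log 2 + o(1)}$ on this typical set.

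To compute the Fourier coefficients I use unique factorization in $\Z[i]$. Writing $n^2 - 4 = \prod_{p \equiv 1(4)} p^{e_p} \prod_{q \equiv 3(4)} q^{2f_q}$ (times a possible factor of $2$ when $n$ is even), every $z = x + iy$ of norm $n^2-4$ decomposes uniquely as $z = u \cdot \prod_p \pi_p^{a_p}\bar\pi_p^{e_p - a_p}\cdot \prod_q q^{f_q}$ with a Gaussian unit $u$ and integers $0 \le a_p \le e_p$, so $\arg(z) = \arg(u) + \sum_p (2a_p - e_p)\theta_p$. The condition ``$x$ even'' selects a subset of the four Gaussian units depending on the parity of $n$, and orthogonality yields the product formula
\[
|\widehat{\mu_n^\star}(k)| \le \prod_{\substack{p \mid n^2-4 \\ p \equiv 1(4)}} F_p(k), \qquad F_p(k) := \frac{1}{e_p+1}\left|\frac{\sin((e_p+1)k\theta_p)}{\sin(k\theta_p)}\right|,
\]
which collapses to $F_p(k) = |\cos(k\theta_p)|$ in the generic case $e_p = 1$.

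The key estimate, in the style of Erd\H{o}s--Hall and K\'atai--K\"ornyei, is the $L^1$-averaged bound $\frac{1}{|\mathcal N(x)|}\sum_{n \in \mathcal N(x)} |\widehat{\mu_n^\star}(k)| \ll (\log x)^{-\vartheta\log 2 + o(1)}$ for every fixed $k \ne 0$, with the exponent $\vartheta = \log(\pi/2)/\log 2$ arising from $\int_0^{2\pi}|\cos\theta|\,d\theta/(2\pi) = 2/\pi$. I will establish this by expanding $|\cos\theta| = \frac{2}{\pi}+\sum_{j \ge 1} c_j\cos(2j\theta)$, multiplying out $\prod_p|\cos(k\theta_p)|$, and controlling the resulting Hecke character sums over Gaussian primes via Hecke's equidistribution of $\{\theta_p\}_{p\equiv 1(4)}$ together with standard zero-free regions of the associated $L$-functions on $\Z[i]$; only the diagonal $(2/\pi)^{\omega_1(n^2-4)}$ contribution survives the averaging, and on the typical set this equals $(\log n)^{-\vartheta\log 2 + o(1)}$. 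Markov's inequality then gives $|\widehat{\mu_n^\star}(k)| \ll r^\star(n^2-4)^{-\vartheta + o(1)}$ on a density $1-o(1)$ subset of $\mathcal N(x)$ for each fixed $k$. The \emph{main obstacle} is upgrading this to a bound uniform in $k \in [1,K]$: I plan to combine higher-moment refinements (which preserve the diagonal-dominance structure in the Hecke character sums while yielding polynomial savings in Markov) with a union bound over the polylogarithmic range $K = (\log n)^{O(1)}$, taking the moment parameter growing slowly with $x$ so that the target exponent $\vartheta - o(1)$ is preserved. Inserting the resulting uniform Fourier bound into the Erd\H{o}s--Tur\'an inequality then yields Proposition \ref{prop:1}, and hence Theorem \ref{thm:equidistributed} via Proposition \ref{prop:keyprop}.
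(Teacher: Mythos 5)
Your high-level plan coincides with the paper's: reduce via Erd\H{o}s--Tur\'an to bounding Fourier coefficients, exploit multiplicativity coming from unique factorization in $\Z[i]$, use equidistribution of the Gaussian prime angles $\theta_p$ to make $|\cos(k\theta_p)|$ have average $2/\pi$, and combine with the normal order of $\omega_1(n^2-4)$ to land on the exponent $\vartheta=\log(\pi/2)/\log 2$. However, there is a genuine gap in the step you describe as the key estimate. You plan to bound $\frac{1}{|\mathcal N(x)|}\sum_{n\in\mathcal N(x)} |\widehat{\mu_n^\star}(k)|$ by expanding $|\cos\theta|$ in Fourier modes and controlling Hecke character sums, claiming only the diagonal $(2/\pi)^{\omega_1(n^2-4)}$ survives. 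But the averaging variable is $n$, not $m=n^2-4$; what you need to average is a nonnegative multiplicative function evaluated at the shifted product $(n-2)(n+2)$, restricted further to $n$ with $n\pm 2\in\mathcal S$. This is a binary correlation problem, not a problem that Hecke $L$-functions over $\Z[i]$ address directly: there is no clean ``character sum over $n$'' to cancel, because the condition that $\pi\mid(n-2)(n+2)$ for a Gaussian prime $\pi$ is a congruence condition on $n$ of modulus $N\pi$, and multiplying these out produces a sum over composite moduli with density controlled by $\prod_p(1+(|\cos(k\theta_p)|\cdot 2-1)/p)$. The paper's actual device is the Nair--Tenenbaum shifted-argument bound (their (3.2)), applied to the pair of nonnegative multiplicative functions $|w_k(\cdot)|e^{\alpha\omega_1(\cdot)}$ evaluated at $n\pm 2$, together with the Erd\H{o}s--Hall prime sum for $\sum_{p\le X}|\cos(k\theta_p)|$ as local input. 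Your proposal omits this shifted-multiplicative-function input entirely, and without it the ``only the diagonal survives'' claim is not justified.

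Two further remarks. First, your ``main obstacle'' of uniformity in $k$ via higher moments and a union bound is a non-issue in the paper's treatment: they apply Chebyshev once to the full Erd\H{o}s--Tur\'an sum $\sum_{2\mid k,\ k\le\log X}|w_k(n^2-4)|/k$, and the $1/k$ weights handle the range of $k$ automatically; this is considerably simpler than what you propose and sidesteps the need for higher moments. Second, you have not addressed the parity of $n$, which genuinely matters: for odd $n$ the identity $v_k(n^2-4)=(-1)^{k/2}w_k(n^2-4)$ (the paper's (3.8), proved via their manipulation (3.14) relating even-$x$ lattice points to primary Gaussian integers) is what makes the Fourier coefficient multiplicative; for even $n$ one instead uses that $4\mid n^2-4$ forces both coordinates even, so $v_k=u_k$ and one must factor out the power of $2$ before applying the multiplicative machinery to $m\pm 1$ where $n=2m$. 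Your sentence that ``$x$ even selects a subset of the four Gaussian units'' is too coarse to capture this and would need to be replaced by the primary-integer argument to make the product formula precise.
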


Towards a proof of Proposition \ref{prop:1} we record the following consequence of the work of Nair-Tenenbaum
\cite[Eq. (7)]{nair-tenenbaum}.
Let $h \in \mathbb Z$ and $f,g$ be non-negative multiplicative functions such that $f(n) \le \tau_{\ell}(n)$ and $g(n) \le \tau_{\ell}(n)$ for some $\ell$. Then
\begin{equation} \label{eq:NT}
\sum_{n \le X} f(n)g(n+h) \ll X \prod_{p \le X} \left(1+\frac{f(p)-1}{p} \right)\left(1+ \frac{g(p)-1}{p} \right)
\end{equation}
where the implied constant depends on $h$ (alternatively see \cite[Theorem 15.6]{cribro}, which would suffice for our purposes). Additionally, let us recall Hooley's result \cite[Theorem 3]{hooley} that
\begin{equation} \label{eq:Hooley}
\sum_{n \le X} b(n-2)b(n+2) \gg \frac{X}{\log X}.
\end{equation}
Finally we record the following estimate, which follows from the work of
Erd\H{o}s-Hall \cite{erdos-hall}. Given an odd prime $p \equiv 1 \tmod
4)$, define
\begin{equation} \label{eq:varthetadef}
\vartheta_p:=\arctan(y/x)
\end{equation}
where $p =x^2+y^2$ with $0 \le y \le x$, and note that
$|\cos(k\theta_p)|=|\cos(k\vartheta_p)|$ for $k \in \mathbb Z$ with $2|k$.  Hence, for any
$k \in \mathbb Z$ with $2|k$ it follows from repeating the argument in
\cite[Eqs. (24)-(25)]{erdos-hall} that
\begin{equation} \label{eq:eh}
\sum_{\substack{p \le X \\ p \equiv 1 \pamod 4}} |\cos(k\theta_p)| =\frac{1}{\pi} \int_2^X \frac{dt}{\log t}+O\bigg(|k| X e^{-c \sqrt{\log X}}\bigg),
\end{equation}
for some $c>0$.

Before proving Proposition \ref{prop:1} we need the following simple estimate.
\begin{lem} \label{lem:normal} Let $\varepsilon>0$ be fixed but sufficiently small. Then
\begin{equation} \label{eq:normal1}
\sum_{\substack{n \le X \\ \omega_1(n^2-4) \le (1-\varepsilon) \log \log X}} b(n^2-4)  \ll \frac{X}{(\log X)^{1+\frac12 \varepsilon^2}}.
\end{equation}
Also
\begin{equation} \label{eq:normal2}
\sum_{\substack{n \le X \\ \Omega_1(n^2-4) \ge (1+\varepsilon) \log \log X}} b(n^2-4)  \ll \frac{X}{(\log X)^{1+\frac13 \varepsilon^2}}.
\end{equation}
\end{lem}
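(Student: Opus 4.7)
My plan is a Chernoff/Markov bound on the relevant sharp threshold, paired with a Nair--Tenenbaum-type estimate for the moment generating function $\sum_{n \le X} b(n^2-4) z^{\omega_1(n^2-4)}$.

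For \eqref{eq:normal1}, fix $z \in (0,1)$ and use $\mathbf{1}_{\omega_1(m) \le (1-\varepsilon)\log\log X} \le z^{\omega_1(m) - (1-\varepsilon)\log\log X}$. Discarding the zero contribution from $n \equiv 2 \pmod 4$ and restricting to odd $n$, the integers $n-2$ and $n+2$ are coprime, so $b(n^2-4) = b(n-2) b(n+2)$ and $z^{\omega_1(n^2-4)} = F_z(n-2) F_z(n+2)$ where $F_z(m) := b(m) z^{\omega_1(m)}$ is a non-negative multiplicative function with $F_z(m) \le 1 \le \tau_2(m)$. The Nair--Tenenbaum bound \eqref{eq:NT} then yields
\begin{align*}
\sum_{n \le X} F_z(n-2)F_z(n+2) \ll X \prod_{p \le X}\Bigl(1 + \tfrac{F_z(p)-1}{p}\Bigr)^2,
\end{align*}
whose Euler product splits by $p \pmod 4$: primes $\equiv 1 \pmod 4$ contribute an aggregate $\asymp (\log X)^{z-1}$ (using Mertens' theorem in the arithmetic progression $1 \pmod 4$), while primes $\equiv 3 \pmod 4$ contribute $\asymp (\log X)^{-1}$, for a total $\asymp (\log X)^{z-2}$. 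Combined with Markov, the bound is $\ll X(\log X)^{z - 2 - (1-\varepsilon)\log z}$; optimising at $z = 1-\varepsilon$, a Taylor expansion gives exponent $-1 - \varepsilon^2/2 - \varepsilon^3/6 + O(\varepsilon^4)$, which for $\varepsilon$ sufficiently small is $\le -1 - \varepsilon^2/2$, proving \eqref{eq:normal1}.

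For \eqref{eq:normal2}, the analogous approach with $\Omega_1$ in place of $\omega_1$ is obstructed: $b(m) z^{\Omega_1(m)}$ is not dominated by any $\tau_\ell$ for $z > 1$ (at $p^a$ with $p \equiv 1 \pmod 4$ it equals $z^a$). I circumvent this with the decomposition
\begin{align*}
\mathbf{1}_{\Omega_1(m) \ge (1+\varepsilon) \log\log X} \le \mathbf{1}_{\omega_1(m) \ge (1+(1-c)\varepsilon)\log\log X} + \mathbf{1}_{\Omega_1(m) - \omega_1(m) \ge c\varepsilon \log\log X}
\end{align*}
for a small constant $c > 0$ chosen so that $(1-c)^2/2 > 1/3$ (e.g.\ $c = 1 - \sqrt{2/3} - \eta$ for small $\eta > 0$). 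The first term is handled exactly as above but with $z = 1+(1-c)\varepsilon > 1$, yielding $\ll X(\log X)^{-1 - (1-c)^2\varepsilon^2/2 + O(\varepsilon^3)} \le X(\log X)^{-1 - \varepsilon^2/3}$ for $\varepsilon$ sufficiently small.

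For the second term, set $S_1 := \{s \ge 1 \text{ squarefull} : p \mid s \Rightarrow p \equiv 1 \pmod 4\}$, and observe that $\Omega_1(m) - \omega_1(m) = \Omega(s) - \omega(s)$ where $s$ is the largest divisor of $m$ lying in $S_1$. The contribution is therefore
\begin{align*}
\le \sum_{\substack{s \in S_1 \\ \Omega(s) - \omega(s) \ge c\varepsilon \log\log X}} \#\{n \le X : s \mid n^2 - 4,\ b(n^2-4) = 1\}.
\end{align*}
For each fixed $s$, summing over the $\rho(s) \le 2^{\omega(s)}$ residue classes $n_0 \pmod s$ with $n_0^2 \equiv 4 \pmod s$ and applying the arithmetic-progression version of Nair--Tenenbaum (as in \cite[Theorem 15.6]{cribro}) to $b(n-2) b(n+2)$ in each class gives the inner count $\ll X \cdot 2^{\omega(s)}/(s \log X)$, uniformly in $s$. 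A second Markov step on $S_1$-space,
\begin{align*}
\sum_{\substack{s \in S_1 \\ \Omega(s) - \omega(s) \ge A}} \frac{2^{\omega(s)}}{s} \le 2^{-A} \sum_{s \in S_1} \frac{2^{\Omega(s)}}{s} = 2^{-A} \prod_{p \equiv 1 \,(4)} \Bigl(1 + \tfrac{4}{p^2 - 2p}\Bigr) \ll 2^{-A},
\end{align*}
then bounds the total by $\ll X/(\log X)^{1 + c\varepsilon \log 2}$, which is stronger than required when $\varepsilon$ is sufficiently small. The main technical obstacle is obtaining the Nair--Tenenbaum bound uniformly in the arithmetic progression modulo $s$, for squarefull $s \in S_1$ not too large; this is a standard extension available in the literature but requires some bookkeeping of the implicit constants.
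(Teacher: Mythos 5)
Your proof of \eqref{eq:normal1} is essentially the paper's argument: a Chernoff/Markov bound followed by the Nair--Tenenbaum estimate \eqref{eq:NT} applied to $f(m)=b(m)z^{\omega_1(m)}$, with the optimal choice of parameter giving the exponent $-1-\varepsilon^2/2+O(\varepsilon^3)$. One small correction in the setup: it is $n\equiv 0\pmod 4$ (not $n\equiv 2\pmod 4$) that gives $b(n^2-4)=0$, since then $n^2-4=4(4k^2-1)$ with $4k^2-1\equiv 3\pmod 4$. The case $n\equiv 2\pmod 4$ is genuinely nonzero; it must be handled separately (write $n=2m$, so that $n^2-4=4(m-1)(m+1)$, factor out the even parts, and apply \eqref{eq:NT} with shift $h=1$), but this is entirely analogous and the paper, too, is silent on it.

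For \eqref{eq:normal2} the paper merely says the proof ``follows from a similar argument, which we will omit,'' and your treatment supplies a detail the paper elides: $b(m)z^{\Omega_1(m)}$ with $z>1$ is indeed not bounded by any $\tau_\ell$ (it equals $z^a$ at $p^a$ for $p\equiv 1\pmod 4$, which outgrows the polynomial-in-$a$ bound $\tau_\ell(p^a)$), so \eqref{eq:NT} as quoted does not apply directly. Your split into the events $\{\omega_1\ \text{large}\}$ and $\{\Omega_1-\omega_1\ \text{large}\}$, handling the first by the $\omega_1$-Chernoff argument with $z=1+(1-c)\varepsilon$ and the second via the squarefull kernel $s\in S_1$, is a correct and natural way to close this gap, and the choice of $c$ with $(1-c)^2/2>1/3$ does deliver the required exponent $-1-\varepsilon^2/3$. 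On the acknowledged technical point about Nair--Tenenbaum uniformly in the progression mod $s$: this can be sidestepped for all but small $s$. For $s>(\log X)^C$ one may use the crude bound $\#\{n\le X: s\mid n^2-4\}\ll X\,2^{\omega(s)}/s$ together with the convergent squarefull tail $\sum_{s\in S_1,\,s>Y}2^{\omega(s)}/s\ll Y^{-1/2+o(1)}$, which already beats any fixed power of $\log X$ for $C$ large; the arithmetic-progression input is then needed only for $s\le(\log X)^C$, where it is uniform without issue. The second Markov step and the evaluation of $\sum_{s\in S_1}2^{\Omega(s)}/s$ as a convergent Euler product are both correct. Overall your argument is sound and, on the second estimate, more careful than what the paper records.
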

\begin{rem}
For $r^{\star}(n) \neq 0$, we have that
\[
2^{\omega_1(n)} \le r^{\star}(n) \le 2^{\Omega_1(n)+2}.
\]
Hence,
by \eqref{eq:normal1} and \eqref{eq:normal2} we have for all integers $n \le x$ with $r^{\star}(n^2-4) \neq 0$ that outside of an exceptional set of size at most $O(X/(\log X)^{1+\frac13 \varepsilon^2})$
\begin{equation} \label{eq:normalorder}
r^{\star}(n^2-4)\asymp (\log n)^{\log 2 \pm 2\varepsilon}.
\end{equation}
\end{rem}
\begin{proof}
Applying Chernoff's bound and \eqref{eq:NT} we have for any $\alpha>0$
\[
\begin{split}
\sum_{\substack{n \le X \\ \omega_1(n^2-4) \le  (1-\varepsilon) \log \log X}} b(n^2-4) & \le  (\log X)^{(1-\varepsilon)\alpha} \sum_{\substack{n \le X}} b(n^2-4)  e^{-\alpha \omega_1(n^2-4)} \\
&\ll  X (\log X)^{(1-\varepsilon) \alpha} \prod_{p \le X} \left(1 + \frac{b(p)e^{-\alpha}-1}{p} \right)^2 \\
& \ll \frac{X}{\log X} (\log X)^{\alpha(1-\varepsilon)+e^{-\alpha}-1}.
\end{split}
\]
Taking $\alpha=\varepsilon$ it follows that
$
\alpha(1-\varepsilon)+e^{-\alpha}-1 \le -\varepsilon^2/2,
$
which completes the proof of \eqref{eq:normal1}. The proof of \eqref{eq:normal2} follows from a similar argument, which we will omit.
\end{proof}

\begin{proof}[Proof of Proposition \ref{prop:1}]
For $n$ such that $b(n)=1$, $n \not\equiv 2 \tmod 4)$ and $k \in \mathbb Z$ let
\begin{equation} \label{eq:ukdef}
\begin{split}
u_k(n)& :=\frac{1}{r(n)}\sum_{\substack{\vec x\in \mathbb Z^2 \\ |\vec
    x|^2=n}} e^{ik \theta(\vec x)},
\qquad \qquad
v_k(n) :=
\frac{1}{r^{\star}(n)}\sum_{\substack{\vec x=(x,y)\in \mathbb Z^2 \\
    |\vec x|^2=n \\ x \text{ is even}}} e^{ik \theta(\vec x)},
    \end{split} \end{equation}
and
\begin{equation}
\label{eq:wk def}
w_k(n) :=\frac{4}{r(n)}\sum_{\substack{\vec x=(x,y)\in \mathbb
    Z^2 \\ |\vec x|^2=n \\ x+iy \text{ is primary} }} e^{ik
  \theta(\vec x)}.
\end{equation}
The function $w_k(\cdot)$ is multiplicative, and for $4|k$ we have that $u_k(n)$ is multiplicative (if $4\nmid k$ then $u_k(n)=0$).
Also, for any integer $m$, if $k$ is odd then $
v_k(m)=0$, which can be seen by noting that $\theta(-x,-y) \equiv \pi+\theta(x,y) \tmod 2\pi)$; so if $k$ is odd the  terms corresponding to $(x,y)$,$(-x,-y)$ in the sum cancel with one another. Let us also record the following basic property
\begin{equation} \label{eq:real}
    v_k(n)=v_{-k}(n),
\end{equation}
which follows from making the change of variables $(x,y)\rightarrow (x,-y)$.

\vspace{2mm}

Since $x+iy$ is primary if and only if $x-iy$ is primary, by grouping together terms with their conjugates it follows that $w_k(n)$ is real-valued, so that
\begin{equation} \label{eq:real2}
w_k(n)=\overline{ w_{k}(n)}=w_{-k}(n).
\end{equation}
Below we will prove that if $n$ is odd, then
\begin{equation} \label{eq:claim1}
v_k(n)=(-1)^{k/2}  w_k(n),
\end{equation}
for $2|k$.
Hence, by the Erd\H{o}s-Turan inequality (see \cite[Corollary 1.1]{montgomery}) and \eqref{eq:claim1}, it follows that the l.h.s. in \eqref{eq:discrepancy} is, for odd $n \le X$, bounded by
\begin{align*}
\ll \frac{1}{\log X}+\sum_{\substack{1 \le k \le \log X \\ 2|k}} \frac{| w_{k}(n^2-4)|}{k}.
\end{align*}
Let $\varepsilon>0$ be sufficiently small but fixed and let
\begin{align*}
N(X)=\left\{n \le X:  b(n^2-4)=1 \text{ and } \omega_1(n^2-4) \ge (1-\varepsilon)\log \log X\right\},
\end{align*}
\begin{align*}
N_{\text{odd}}(X)=\left\{ n \in  N(X) : n \text{ is odd}\right\},
\end{align*}
and $N_{\text{even}}(X)$ be defined similarly.
By Lemma \ref{lem:normal} and \eqref{eq:Hooley},
\begin{equation} \label{eq:zerodensity}
\frac{1}{| \{ n \le X : b(n^2-4)=1\}|} \sum_{\substack{n \le X \\ \omega_1(n^2-4) \le (1-\varepsilon) \log \log X}} b(n^2-4) \ll \frac{1}{(\log X)^{\frac12 \varepsilon^2}},
\end{equation}
so, for results concerning a density one subsequence of integers with $b(n^2-4)=1$, it suffices
to consider $n \in N(X)$.

Applying Chebyshev's inequality we get that
\begin{equation} \label{eq:cheby}
\begin{split}
& \bigg| \bigg\{ n \in N_{\text{odd}}(X) : \sum_{\substack{1 \le k \le \log X \\ 2|k}} \frac{|w_k(n^2-4)|}{k}  \ge (\log X)^{-\log \frac{\pi}{2}+\varepsilon} \bigg\}\bigg| \\
&\le  (\log X)^{\log \frac{\pi}{2}-\varepsilon} \sum_{\substack{1 \le k \le \log X \\ 2|k}} \frac{1}{k}
\sum_{\substack{n \le X \\ \omega_1(n^2-4) \ge (1-\varepsilon) \log \log X \\ n \text{ is odd} }} \left|w_k(n^2-4)\right|.
\end{split}
\end{equation}
To bound the sum on the r.h.s. of \eqref{eq:cheby} we apply Chernoff's bound and \eqref{eq:NT} to get for any $\alpha >0$ and $1 \le k \le \log X$ that
\[
\begin{split}
\sum_{\substack{n \le X \\ \omega_1(n^2-4) \ge (1-\varepsilon) \log \log X \\ n \text{ is odd} }} |w_k(n^2-4)|
\le & \frac{1}{(\log X)^{\alpha(1-\varepsilon)}}\sum_{\substack{n \le X \\  n \text{ is odd}}} |w_k(n+2)|e^{\alpha \omega_1(n+2)} \cdot |w_k(n-2)| e^{\alpha \omega_1(n-2)} \\
\ll & \frac{X}{(\log X)^{\alpha(1-\varepsilon)}} \prod_{p \le X} \left(1 + \frac{|w_k(p)| e^{\alpha} b(p)-1}{p}\right)^2.
\end{split}
\]
Applying \eqref{eq:eh}, the above quantity is bounded by
\[
\ll \frac{X}{\log X} (\log X)^{e^{\alpha} \frac{2}{\pi}-\alpha-1+\alpha \varepsilon},
\]
and after taking $\alpha=\log \pi/2$ we get that this is $\ll \frac{X}{\log X} (\log X)^{(-1+\varepsilon)\log \pi/2}.$
 Using this estimate in \eqref{eq:cheby} along with \eqref{eq:normalorder} and \eqref{eq:zerodensity} shows that the bound \eqref{eq:discrepancy} holds for all odd $n \le X$ such that $b(n^2-4)=1$ outside a set of size
\[
\ll \frac{X (\log \log X)}{(\log X)^{1+\varepsilon-\varepsilon \log \pi/2}}+\frac{X}{(\log X)^{1+\frac13\varepsilon^2}}.
\]
This completes the proof of Proposition \ref{prop:1} for odd $n$.

We now show that for almost all even $n$ the discrepancy (i.e. \eqref{eq:discrepancy}) is small.
For even $n=2m$ we have $4|n^2-4$, hence
$v_k(n^2-4)=u_k(n^2-4)$.
Also,
\[
|u_k(n^2-4)| \le |u_k(m+1)\cdot u_k(m-1)|,
\]
which follows from the multiplicativity of $u_k(n)$ and the observation that
$|u_k(2^{a})|=1$ for $4|k$,
which can be seen by analyzing the solutions $x,y \in \mathbb Z$ to $x^2+y^2=2^a$ (recall if $4\nmid k$ then $u_k(n)=0$). Hence, proceeding as before, we obtain the desired discrepancy bound for almost all even $n$ as well.

It remains to establish the claim, i.e. \eqref{eq:claim1}. To see this first note that for a function with $f(z)=f(\overline z)$ we have
\begin{equation} \label{eq:observation}
\begin{split}
 \sum_{\substack{a+ib \in \mathbb Z[i] \\ a \text{ is even} \\ b \text{ is odd}}} f(a+ib)=&   \sum_{\substack{a+ib \in \mathbb Z[i] \\ a \text{ is odd} \\ b \text{ is even}}} f(i(a+ib)) \\
=&  \sum_{\substack{a+ib \in \mathbb Z[i] \\ a \equiv 1-b \pamod 4 \\ b \text{ is even}}} f(-b+ai)+   \sum_{\substack{a+ib \in \mathbb Z[i] \\ -a \equiv 1-b \pamod 4  \\ b \text{ is even}}} f(-b+ai) \\
   =& \sum_{\substack{a+ib \in \mathbb Z[i] \\ a \equiv 1-b \pamod 4 \\ b \text{ is even}}} f(-b+ai)+   \sum_{\substack{a+ib \in \mathbb Z[i] \\ a \equiv 1-b \pamod 4  \\ b \text{ is even}}} f(-b-ai) \\
   =& 2 \sum_{\substack{a+ib \in \mathbb Z[i] \\ a+ib \text{ is primary}}} f(i(a+ib)),
   \end{split}
\end{equation}
provided the sums above are absolutely convergent. Taking
\begin{align*}
f(z)=(e^{i k \arg(z)}+e^{-i k \tmop{arg}(z)})1_{|z|^2=n},
\end{align*}
with $2|k$, and applying \eqref{eq:rstar}, \eqref{eq:real}, \eqref{eq:real2} and \eqref{eq:observation} we get that
\[
2v_k(n)=v_{k}(n)+v_{-k}(n)=2(-1)^{k/2}(w_k(n)+w_{-k}(n)) \frac{r(n)}{4 r^{\star}(n)}=2(-1)^{k/2}  w_k(n),
\]
which establishes \eqref{eq:claim1}.
\end{proof}

\section{Non-uniform limits: proof of Theorem \ref{thm:non-equidist}}

In this subsection we show that there exist sparse subsequences of integers $\{n_j\}_j$ such that $r^{\star}(n_j^2-4)\rightarrow \infty$ as $j \rightarrow \infty$, and the integral lattice points on circles of radii $\sqrt{n_j^2-4}$ with even $x$-coordinates fail to equidistribute as $j \rightarrow \infty$.
A key ingredient (Lemma \ref{lem:klr} below) is a result
of Kurlberg-Lester-Rosenzweig \cite{KLR}, which builds
on related works of Huxley-Iwaniec \cite{huxley-iwaniec} and
Friedlander-Iwaniec \cite[Theorem 14.8]{cribro}.



We also use a construction, which exploits the fact that Gaussian primes are equidistributed in narrow sectors.
This follows from work of Kubilius \cite{kubilius} who proved that
\[
\left| \left\{ p \le X : p \equiv 1 \tmod{4}) \text{ and } |\vartheta_p| \le X^{-1/10} \right\} \right|\gg \frac{X^{9/10}}{\log X}.
\]
Using the estimate above, it follows that there exists $Q < (\log X)^{1/{10}}$ which is squarefree such that $|\{p: p|Q\}| \asymp \log \log \log X$ and if $p|Q$ then $\log \log X \le p \le (\log \log X)^2$, $p \equiv 1 \tmod 4)$
and $|\vartheta_p| \le (\log \log X)^{-1/10}$.

\begin{lem} \label{lem:klr}
Let  $m_0$ be an integer such that $p|m_0$ implies $p \equiv 1 \tmod 4)$. Suppose $m_0=f^2 e$ where $e$ is squarefree, $1 \le e \le \sqrt{\log \log X}$ and $f \ll 1$.
Also, let $Q$ be as above.
Then there exist  $\gg \frac{X}{(\log X)^{2+o(1)}}$ integers $n \le X$ such that
\[
n^2-4=m_0 p_1p_2 Q \ell_n
\]
where $p_1,p_2$ are distinct primes $\equiv 1 \tmod 4)$ with $|\vartheta_p| \le (\log \log X)^{-1/2}$, $p_1,p_2 \ge \log X$ and $r^{\star}(\ell_n)\asymp 1$. Additionally, $m_0, \ell_n, p_1,p_2, Q$ are pairwise co-prime.
\end{lem}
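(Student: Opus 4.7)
The plan is to fix a distribution of the prescribed factors $m_0$ and $Q$ across $n-2$ and $n+2$, insert the two narrow-sector primes $p_1, p_2$ into the remaining cofactors, and count the resulting $n \le X$ using a two-dimensional sieve of Friedlander--Iwaniec / Huxley--Iwaniec type, exactly as packaged by the Kurlberg--Lester--Rosenzweig theorem \cite{KLR}.

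First, since every prime divisor of $m_0 Q$ is odd and $\gcd(n-2,n+2) \mid 4$, each prime power of $m_0 Q$ must lie in exactly one of $n-2$ or $n+2$. I would fix a single admissible distribution, say $m_0 \mid n-2$ and $Q \mid n+2$, and place $p_1$ on the $n-2$ side and $p_2$ on the $n+2$ side. It then suffices to produce $\gg X/(\log X)^{2+o(1)}$ integers $n \le X$ of the shape
\[
n - 2 = m_0\, p_1\, \ell', \qquad n + 2 = Q\, p_2\, \ell'',
\]
with $p_1, p_2 \equiv 1 \pmod{4}$ lying in the required sector, $p_1, p_2 \ge \log X$, and $\ell_n := \ell' \ell''$ satisfying $r^{\star}(\ell_n) \asymp 1$. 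Pairwise coprimality of $(m_0, p_1, p_2, Q, \ell_n)$ is then automatic from $p_1 \ne p_2$ and the size separation $\log X \gg \max(m_0, Q)$, using that $m_0 \ll \sqrt{\log \log X}$ and $Q \le (\log X)^{1/10}$.

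Second, I would apply the Kurlberg--Lester--Rosenzweig result to count pairs of values $(n-2, n+2)$ of the above form with the extra constraint that $\ell' \ell''$ has only $O(1)$ prime factors $\equiv 1 \pmod{4}$ (equivalently, $r^{\star}(\ell_n) \asymp 1$; note that $\ell_n$ is automatically a sum of two squares, since $n^2-4$ is and $m_0 p_1 p_2 Q$ is built only from primes $\equiv 1 \pmod{4}$). Heuristically, the two sum-of-two-squares conditions on $n-2$ and $n+2$ together contribute a factor of $(\log X)^{-1}$, matching Hooley's bound \eqref{eq:Hooley}, while pinning down one extra large prime factor on each side costs an additional $(\log X)^{o(1)}$; summing over admissible $p_1, p_2$ of size $\ge \log X$ then yields the target lower bound $\gg X/(\log X)^{2+o(1)}$. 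Finally, to impose the sector constraint $|\vartheta_{p_i}| \le (\log \log X)^{-1/2}$, I invoke Kubilius' theorem: a proportion $\gg (\log \log X)^{-1/2}$ of primes $p \equiv 1 \pmod{4}$ in the relevant range lies in any such sector, and since $(\log \log X)^{-c} = (\log X)^{-o(1)}$, this reduction is absorbed into the $(\log X)^{o(1)}$ slack of the exponent.

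The main obstacle is the second step: the bilinear / vector sieve must be run \emph{uniformly} in the prescribed factors $m_0 \ll \sqrt{\log \log X}$ and $Q \le (\log X)^{1/10}$, in such a way that it identifies precisely one large prime factor $\equiv 1 \pmod{4}$ on each of $n-2$ and $n+2$ while keeping the complementary cofactor $\ell_n$ essentially free of prime factors $\equiv 1 \pmod{4}$. This is exactly the flexibility built into the framework of \cite{KLR}, which in turn refines Friedlander--Iwaniec \cite[Theorem 14.8]{cribro} and the bilinear bounds of Huxley--Iwaniec \cite{huxley-iwaniec}; once that input is invoked with level parameters $A = m_0$ and $B = Q$, what remains is routine bookkeeping on coprimality between the pieces and on densities of primes in Gaussian sectors.
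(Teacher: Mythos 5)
Your overall plan is right --- this lemma is proved by a direct appeal to \cite[Proposition 2.1]{KLR} --- but the factorization scheme you chose is not the one the paper uses, and the difference matters.

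The paper puts \emph{both} primes on the same side: it applies \cite[Proposition 2.1]{KLR} with $Q_0 = 1$, $Q_1 = m_0 Q$, and $\varepsilon = (\log\log X)^{-1/2}$, producing integers $n$ with $n-2 = p_1 p_2$ (a pure $P_2$, no free cofactor) and $n+2 = m_0 Q \,\ell_n$, with $\ell_n$ free of prime factors below $X^{\eta}$. That exactly matches the shape handled by the KLR proposition, which is a Huxley--Iwaniec / Friedlander--Iwaniec type result: one side is a product of two primes in prescribed Gaussian sectors, the other side is constrained to be a sum of two squares with a prescribed divisor and no small primes. The pairwise coprimality of $m_0$, $Q$, $\ell_n$, $p_1$, $p_2$ then drops out because all prime factors of $\ell_n$ are $\ge X^{\eta}$ while $m_0 Q < (\log X)^{O(1)}$, and $(n-2,n+2)\mid 4$ takes care of the rest; this is the step you left implicit, and it is where the $(\ell_n, P(x^{\eta_2}))=1$ condition from the sieve is actually used.

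Your scheme --- $n-2 = m_0 p_1 \ell'$, $n+2 = Q p_2 \ell''$, with $\ell_n = \ell'\ell''$ --- asks for one large sector prime plus a controlled cofactor on \emph{each} side simultaneously. That is a genuinely harder double-sided sieve problem than what \cite[Proposition 2.1]{KLR} provides, and your proposal essentially acknowledges this as the main obstacle and then asserts that the KLR framework has ``exactly the flexibility'' needed. It does not: the result is built to yield an exact $P_2$ on one side with only a prescribed-divisor-plus-sum-of-two-squares condition on the other, not two independent (sector prime)$\times$(almost-free cofactor) structures. Unless you can produce a sieve input that handles the double-sided version, the argument has a gap. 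The fix is simply to switch to the paper's distribution: put $p_1 p_2 = n-2$ and $m_0 Q \ell_n = n+2$, which is what $Q_0 = 1$, $Q_1 = m_0 Q$ encodes; then everything you wanted to deduce (densities, the $(\log X)^{o(1)}$ loss from the sector restriction via Kubilius, and coprimality) goes through without the problematic step.
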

\begin{proof}
This follows from \cite[Proposition 2.1]{KLR} with $Q_0=1$ and $Q_1=m_0Q$ and $\varepsilon=(\log\log X)^{-1/2}$, and here $n-2=p_1p_2$ and $n+2=m_0Q \ell_n$, so $(p_1p_2, \ell_n)=1$. Note that each of the prime factors of $\ell_n$ are $\ge X^{\eta}$ for some small but fixed $\eta>0$ so that $(\ell_n,Q)=(\ell_n,m_0)=(Q,m_0)=1$.
\end{proof}

\begin{proof}[Proof of Theorem \ref{thm:non-equidist}]
By Lemma \ref{lem:klr} there exist $\gg X/(\log X)^{2+o(1)}$ integers $n \le X$ such that $n^2-4=m_0 \ell_n  Q'$, where $Q'=Q p_1p_2$.
 Moreover $Q',\ell_n,m_0$ are pairwise co-prime. Also, for a prime $p|Q'$ we have $\theta_p=l_p \frac{\pi}{2}+O((\log \log X)^{-1/10})$ for some $l_p \in \mathbb Z$, where $\theta_p$ is as defined in \eqref{eq:thetadef}. Let $u_n=(-1)^{\sum_{p|Q'} \ell_p}$ and using the fact that $w_k(\cdot)$ is multiplicative we have for $2|k$
 \[
 w_k(Q')= \prod_{p|Q'} \cos(k \theta_p)=u_n^{k/2}+o(1).
 \]
 Hence, for $n$ as above we have for $2|k$ that
 \[
 w_k(n^2-4)=w_k(Q') w_k(\ell_n m_0)=(u_n^{k/2}+o(1))w_k(\ell_nm_0).
 \]
 Using this \eqref{eq:rstar} and \eqref{eq:claim1} it follows for $n$ as above and $2|k$ that
\begin{equation} \label{eq:fourier}
\begin{split}
 v_k(n^2-4)
 =&(-1)^{k/2}  w_k(n^2-4) \\
 =&  u_n^{k/2} (-1)^{k/2}  w_k(\ell_n m_0)+o(1)
 \end{split}
 \end{equation}
Since the angles of primary primes equidistribute in sectors\footnote{To see this, note that $\chi_k((\alpha))= \left(\frac{\alpha}{|\alpha|}\right)^k$, where $\alpha$ is the primary generator of $(\alpha)$, is a Hecke grossencharacter $\tmod \mathfrak m)$ of frequency $k$ for any $k \in \mathbb Z$ where $\mathfrak m=(1)$ if $4|k$ and $\mathfrak m=2(1+i)$ if $4 \nmid k$ \cite[Eq'n (3.91)]{iwaniec-kowalski}. Hence, equidistribution of angles of primary primes follows from the standard zero free region for the $L$-functions attached to these characters.}
we know there exist distinct primes $q_1,q_2$ such that $\theta_{q_j}=\tfrac{\pi}{4}+o(1)$, as $q_j \rightarrow \infty$, for $j=1,2$. Take $m_0=q_1q_2 \widetilde m_0$ with $(\widetilde m_0,q_1q_2)=1$. Also, $2|k$ so $e^{ik \theta}=e^{ik(\theta+\pi)}$ for any $\theta \in \mathbb R$. Hence,
\[
\begin{split}
w_k(q_1q_2 \widetilde m_0)=&w_k(q_1q_2)w_k(\widetilde m_0)= \frac{1}{4}(e^{ik \frac{\pi}{4}}+e^{-ik \frac{\pi}{4}})^2  w_k(\widetilde m_0)+o(1) \\
=& \frac{1}{r(\widetilde m_0)} \sum_{\substack{\vec{x}=(x,y) \in \mathbb Z \\ |\vec x|^2=n \\ x+iy \text{ is primary }}}\left(e^{ik \theta(\vec x)}+e^{ik( \theta(\vec x)+\pi)}+e^{ik( \theta(\vec x)+\frac{\pi}{2})}+e^{ik (\theta(\vec x)-\frac{\pi}{2})}\right)+o(1) \\
=& u_k(\widetilde m_0)+o(1),
\end{split}
\]
where $u_k$ is as given in \eqref{eq:ukdef} (note that the definition extends to all $n$ with $b(n)=1$).
Therefore using this along with \eqref{eq:claim1} and \eqref{eq:fourier} we conclude for $2|k$ that
\begin{equation} \label{eq:fourierconcluded}
\begin{split}
v_k(n^2-4)=& u_k(\widetilde m_0) u_n^{k/2} (-1)^{k/2} w_k(\ell_n)+o(1).
\end{split}
\end{equation}

Let $\nu$ be a probability measure on $S^1$, which is attainable. Since $\nu$ is invariant under rotation by $\tfrac{\pi}{2}$ we have $\widehat \nu(k)=0$ if $4 \nmid k$, where $\widehat \nu(k)= \frac{1}{2\pi}\int_{S^1} e^{-ikt} d\nu(t)$. There exists a sequence of integers $\{m_j\}$
such that for each $k \in \mathbb Z$
\begin{equation}\label{eq:measure}
\lim_{j \rightarrow \infty} u_k(m_j)=\widehat \nu(-k).
\end{equation}
Write $m_j=2^{a_j} l_j \widetilde m_j $ where $p| l_j$ implies $p \equiv 3 \tmod 4)$ and $p|\widetilde m_j $ implies $p \equiv 1 \tmod 4)$.
Observe that if $p \equiv 3 \tmod 4)$ then $u_k(p^a)=(1+(-1)^a)/2$ so $u_k(m_j)=u_k(2^{a_j} \widetilde m_j)$. Hence, taking $\widetilde m_0=\widetilde m_0(j)= q_0^{a_j} \widetilde m_j$ where $q_0 \equiv 1 \tmod 4)$ , with $\theta_{q_0}=\tfrac{\pi}{4}+o(1)$ and $(q_0, q_1q_2 \widetilde m_j)=1$ we have that
\begin{equation} \label{eq:attained}
u_k(m_j)=u_k(\widetilde m_0)+o(1).
\end{equation}
Also, by passing to a subsequence $\{n_j\}$ of the integers $n$ as above there exists a probability measure $\widetilde \nu$ on $S^1$, which is supported on at most $O(1)$ points, such that for each $k \in \mathbb Z$
\begin{equation} \label{eq:finite}
\lim_{j \rightarrow \infty} u_{n_j}^{k/2} (-1)^{k/2} w_k(\ell_{n_j})  = \widehat{ \widetilde \nu}(-k).
\end{equation}
Hence, by \eqref{eq:fourierconcluded}, \eqref{eq:measure}, \eqref{eq:attained} and \eqref{eq:finite} we have for each $k \in \mathbb Z$ that
\[
\lim_{j \rightarrow \infty} \nu_k(n_j^2-4)=\widehat \nu(-k) \widehat{\widetilde \nu}(-k).
\]
Since by Proposition \ref{prop:keyprop}, $\widehat \mu_{n_j}(-k)=v_{k}(n_j^2-4)$ we conclude that $\mu_{n_j} \Rightarrow \nu \ast \widetilde \nu$, as claimed. \end{proof}

\section{Beyond symmetry: proof of Theorem \ref{thm:break}}
\label{sec:break-rotat-symm}

\subsection{Outline of the proof of Theorem \ref{thm:break}}

As we have seen, by Proposition \ref{prop:keyprop}, the measures $\mu_{n}$ can be described in terms of
the angles from lattice points on Euclidean circles, namely the set of
points
\begin{equation} \label{eq:lattice}
\left\{ (x,y)/\sqrt{n^{2}-4} : (x,y) \in \Z^2 : x^{2}+y^{2} = n^{2}-4, x
\equiv 0 \tmod 2)\right\}.
\end{equation}
We will work with this interpretation.
The proof of Theorem \ref{thm:break} uses a construction based on producing a sequence of odd integers $n$
such that $r(n^{2}-4) >0$ and $\Omega(n^{2}-4)$ is bounded, and then
showing that almost all of the corresponding measures $\mu_{n}$ are
asymmetric. Intuitively this should not be surprising --- measures
supported on a finite number of points ought to be very unlikely to be
symmetric. However, we observe that this construction can be
modified to show the existence of asymmetric measures while allowing
$r(n^{2}-4)$ to grow, by ensuring $Q | n-2$ where $Q$ is a product of
a growing number of primes $p$ with $\vartheta_{p}$ very small (the
details will be left to the interested reader).
%
%
For the existence part we use a lower bound sieve.  To show that most
measures within this sequence break symmetry, we use an upper bound
sieve to show that for rather few such $n$'s, $n^{2}-4$ is divisible
by a ``thin'' set of primes, of relative density $\delta$ for
$\delta>0$ small.

\subsection{Preliminaries}
Since multiplication of a Gaussian integer by $i$ interchanges the parities of the real and
imaginary parts, we can just as well study the distribution of angles
of points having even imaginary part.
The advantage here is that this set is closed under multiplication.
Define a function
$\chi_{2}(z) := (z/|z|)^{2}$ on $\Z[i] \setminus \{ 0\}$, and let
$$
W_{2}(n) := \frac{1}{2}
\sum_{\substack{z \in \Z[i] : |z|^{2}=n \\ \Im(z) \equiv 0 \tmod 2)}}
\chi_2(z).
$$
By using the relation with Euclidean lattice points (see \eqref{eq:lattice}) it is not hard to see that the measure $\mu_{n}$ is asymmetric if $W_2(n^2-4) \neq 0$, and we will justify this later.

To analyze $W_2(n)$, first note that
the set $\{z \in \mathbb Z[i]:\Im(z) \equiv 0 \tmod 2)\}$ is closed under
multiplication, and for any element $z$ of this set we can write
$$
z =  u \prod_{\pi_{i}|z}  \pi_{i}^{e_{i}}
$$
with $\pi_{i}$ ranging over Gaussian primes with $\Im(\pi_i)
\equiv 0 \tmod 2)$
and $u=u(z) \in \{-1,1\}$.  In particular, $\chi_2(u)=1$, which for
our purposes resolves the indeterminacy of the sign.
If $(p) = (\pi) \overline{(\pi)}$ (as ideals) for $p$ split, it will
be convenient to use the following sign convention: the sign of the representative $\pi$ of the ideal $(\pi)$ with $\Im(\pi)$ even can be picked
arbitrarily, and then we choose the representative of the
ideal $(\overline{\pi})$ to be $\overline{\pi}$ (note that complex
conjugation preserves the parity of the imaginary part).
Further, $W_2(n)$ is multiplicative. We find that
$$
W_{2}(p) =
(\pi^{2} + (-\pi)^{2} +  \overline{\pi^{2}} + (-
\overline{\pi})^{2})/(2p)
=
(\pi^{2} + \overline{\pi^{2}})/p
= 2\cos( 2 \theta_{p}),
$$
where $\theta_p$ is as in \eqref{eq:thetadef}.
Moreover, it is not hard to see that for any integer $j \ge 0$
\begin{equation} \label{eq:chebypol}
W_2(p^j)= \frac{1}{p^j}\sum_{\ell=0}^{j} \pi^{2\ell} \overline{\pi}^{2j-2\ell}= \frac{\sin((j+1)2\theta_p)}{\sin(2\theta_p)},
\end{equation}
where the last step follows from evaluating the geometric sum (note that $p^j=\pi^j \overline \pi^j).$
Also if $p \equiv 3 \tmod 4)$ then
\begin{equation} \label{eq:inert}
W_2(p^j)=(1+(-1)^j)/2.
\end{equation}
Hence, in
order to bound $W_{2}(n^{2}-4)$ away from zero, it is enough to bound
$W_{2}(p^j)$ away from zero for all $p^j || n^{2}-4$ since we
consider odd $n$ for which $\Omega(n^{2}-4) =O(1)$.

As previously mentioned, our construction uses estimates provided by upper and lower bound sieves. To state these results we will first introduce some notation.
Let $B_{0}$ be a sufficiently large integer, and let $\eta_1,\eta_2>0$ be
sufficiently small numbers with $\eta_1< \eta_2$. Define, for $x$ large and $\varepsilon>0$ fixed,
\begin{eqnarray}
P_{\varepsilon}:= \left\{ p \in [ (\log x)^{B_{0}}, x] : p \equiv 1 \tmod
4), |\vartheta_p| \le \varepsilon\right\},
\end{eqnarray}
where $\vartheta_p$ is as in \eqref{eq:varthetadef}. Let
\[
P_{\varepsilon}' :=
\left\{ p  \in P_{\varepsilon} : p \le x^{1/9}\right\},
\quad
P_{\varepsilon}''
: =
\left\{ p \in P_{\varepsilon} : (\log x)^{B_0} \le p \le x^{\eta_1} \right\}
\]
and for $z>0$, put
\[
P(z):=\prod_{p \le z} p.
\]
Further, for $j \in \mathbb N$ let
\[
Q_{\delta,j}=\left\{q \equiv 1 \tmod 4): \text{ $q$ prime,}
\min_{k \in \mathbb Z} |\theta_q-\tfrac{\pi k}{2(j+1)}|<\frac{\delta}{2}\right\}.
\]
Also, define
\[
M = M(x) :=
\left\{ m \leq x : m = p_{1} p_{2}, p_{1} \in P_{\varepsilon},
p_{2} \in P_{\varepsilon}'', b(m+4) = 1, (m+4,P(x^{\eta_2}))=1 \right\}
\]
and  given $\delta>0$, let
$$
M_{\delta,j} = \{ m \in M : q | m+4 \text{ for some $q\in Q_{\delta,j}$} \}.
$$
\begin{prop}
\label{prop:m-size}
For $\varepsilon>0$ fixed, we have
$$
|M| \gg \varepsilon^{2} \frac{x (\log \log x)}{(\log x)^{2}},
$$
and, for $j \in \mathbb N$ and fixed $\delta>0$,
$$
|M_{\delta,j}| \ll
j \delta \frac{x (\log \log x)}{(\log x)^{2}}
$$
where the implied constant depends at most on $\eta_1$ and $\eta_2$.

\end{prop}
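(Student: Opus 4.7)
The strategy combines two inputs: equidistribution of primary Gaussian primes in narrow angular sectors (via the standard zero-free region for the Hecke $L$-functions $L(s, \chi_{k})$ from Section~4), and a combinatorial sieve applied to the shift $m + 4 = p_{1} p_{2} + 4$. From the first input one reads off, uniformly for $y \geq (\log x)^{B_{0}+1}$, the estimates
$$
\pi_{\varepsilon}(y) := |\{p \leq y : p \equiv 1 \tmod 4),\ |\vartheta_{p}| \leq \varepsilon\}| \asymp \varepsilon \cdot \frac{y}{\log y}, \qquad \sum_{p \in P_{\varepsilon}''} \frac{1}{p} \asymp \varepsilon \log \log x,
$$
together with the sector-density statement $|Q_{\delta,j} \cap [1,y]| \asymp j \delta \cdot y/\log y$, since $Q_{\delta,j}$ is a union of $O(j)$ arcs of width $\delta$ in the $\theta_{q}$-variable.

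For the lower bound on $|M|$, I would first apply the estimates above to count pairs $(p_{1},p_{2}) \in P_{\varepsilon} \times P_{\varepsilon}''$ with $p_{1}p_{2} \leq x$, getting $\sum_{p_{2}} \pi_{\varepsilon}(x/p_{2}) \asymp \varepsilon^{2} x (\log \log x)/\log x$. The two extra conditions on $m+4$ would then be imposed by a lower bound linear sieve on the sequence $\{p_{1}p_{2}+4\}$, contributing the expected saving of $\asymp 1/\log x$. A particularly clean route is to take $\eta_{2} > 1/2$: then $(m+4, P(x^{\eta_{2}})) = 1$ combined with $m+4 \leq 2x$ forces $m+4$ to be a single prime, which --- since $p_{1}, p_{2} \equiv 1 \tmod 4)$ makes $m+4 \equiv 1 \tmod 4)$ --- is automatically a sum of two squares; the problem then reduces to a Chen-type lower bound for primes of the form $p_{1}p_{2} + 4$ with $p_{1}, p_{2}$ in prescribed sectors.

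For the upper bound on $|M_{\delta,j}|$, a union bound yields
$$
|M_{\delta,j}| \leq \sum_{\substack{q \in Q_{\delta,j} \\ x^{\eta_{2}} < q \leq x}} |\{m \in M : q \mid m+4\}|,
$$
with the lower range $q > x^{\eta_{2}}$ enforced by the rough-part condition on $m+4$. I would bound each inner count by $\ll |M|/q$ via Brun--Titchmarsh combined with the sector condition and the sieve on $m+4$; partial summation over $Q_{\delta,j}$ then produces $\sum_{q} 1/q \ll_{\eta_{2}} j \delta$, whence the desired estimate follows (the fixed $\varepsilon$-dependence being absorbed into the implicit constant).

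I expect the principal obstacle to be the required level of distribution for the combined sieve: primes in $P_{\varepsilon}$ must equidistribute in arithmetic progressions on average up to modulus $x^{1/2 - o(1)}$ while simultaneously being restricted to a short angular sector. This is a Bombieri--Vinogradov statement for the Hecke characters $\chi_{k}$, with careful accounting needed over each frequency $k$; without it, the lower bound sieve fails to deliver the full $(\log x)^{-1}$ saving.
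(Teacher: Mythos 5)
Your high-level strategy — sector-density counts for the $(p_1,p_2)$ pairs, a lower-bound sieve for the two conditions on $m+4$, and a union bound over $q\in Q_{\delta,j}$ controlled by $\sum 1/q$ — is the right shape, and indeed matches the paper's architecture (the lower bound is ultimately delegated to \cite[Proposition~2.1]{KLR}, with an auxiliary sieve to pass from $P_\varepsilon'$ to $P_\varepsilon''$, and the upper bound is handled in Lemmas~\ref{lem:secondlemma}--\ref{lem:thirdlemma}). However, there are two concrete gaps.

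First, the ``clean route'' of taking $\eta_2>1/2$ would force $m+4$ to be prime, turning the lower bound into a statement about primes of the form $p_1p_2+4$ with $p_1,p_2$ in prescribed sectors. A lower bound sieve cannot produce primes in a sifted set — this is exactly the parity barrier — and no Chen-type device is available here because you also need to retain the sector constraints on both $p_1$ and $p_2$. The paper avoids this by taking $\eta_2$ \emph{small}, so that $(m+4,P(x^{\eta_2}))=1$ is an almost-prime condition accessible to a lower bound linear sieve, and the sum-of-two-squares condition is built into the KLR count (which sieves on the Gaussian integer side). Your alternative route (``a lower bound linear sieve on $\{p_1p_2+4\}$'') is the correct idea, but the $\eta_2>1/2$ shortcut should be dropped.

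Second, for the upper bound the pointwise estimate $|\{m\in M: q\mid m+4\}|\ll |M|/q$ simply fails when $q$ is near $x$. In the range $q\ge 2x^{1-\eta_2}$, the roughness of $m+4$ forces $m+4=q$ exactly, so the inner count is $0$ or $1$, whereas $|M|/q\asymp \varepsilon^2(\log\log x)/(\log x)^2=o(1)$ — the bound cannot hold for each $q$ individually. One must instead show that \emph{on average} over $q\in Q_{\delta,j}\cap(2x^{1-\eta_2},x]$ the count is small, i.e., bound $|\{m\in M: m+4\in Q_{\delta,j}\}|$; the paper's Lemma~\ref{lem:thirdlemma} does this by switching to a sum over $q$, rewriting the constraint via the sector-restricted representation function $r_\delta$, and applying the Fundamental Lemma of the sieve together with \cite[Proposition~A.1]{KLR} for the lattice-point count in progressions. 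The Brun--Titchmarsh/union-bound argument you propose works in the range $x^{\eta_2}\le q\le 2x^{1-\eta_2}$ (this is Lemma~\ref{lem:secondlemma}), but the large-$q$ range needs a genuinely different argument that your sketch does not supply. Your closing worry about a Bombieri--Vinogradov input for Hecke characters is reasonable as a caveat, but in the paper this is absorbed into the cited KLR estimates rather than proved afresh.
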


We will now deduce Theorem \ref{thm:break} from Proposition \ref{prop:m-size}.

\subsection{Proof of Theorem \ref{thm:break}}

\begin{proof}
We begin by noting that for a symmetric measure $\nu$, $d\nu$ is
invariant under the change of variables $t \rightarrow t+\pi/2$.
Hence, $ \frac{1}{2\pi}\int_{S^1} e^{2it} d\nu(t)=- \frac{1}{2\pi}\int_{S^1} e^{2it} d\nu(t)$ and
$\widehat \nu(-2)=0$. Thus, a measure $\nu$ is asymmetric provided
$\widehat \nu(-2) \neq 0$.

Let $M_{\delta}=\cup_{0 \le j \le 1/\eta_2} M_{\delta,j}$ and set $\delta=\varepsilon^3$. Then by Proposition \ref{prop:m-size}, for $\varepsilon$ sufficiently small, we have
\[
|M \setminus M_{\delta}| \gg \varepsilon^2 \frac{x(\log \log x)}{(\log x)^2}.
\]
 Also, by construction $\Omega(m+4) \le 1/\eta_2$. Hence, for $m \in M\setminus M_{\delta}$ it follows from \eqref{eq:chebypol} and \eqref{eq:inert} that
\[
|W_2(m(m+4))|= |W_2(m) \prod_{p^j||m+4} W_2(p^j)| \gg \delta^{1/\eta_2}.
\]
Thus, for $n=m+2$ the measure $\mu_{n}$ is asymmetric, for each $m \in M\setminus M_{\delta}$ since $W_2(n^2-4)$ is uniformly (in $n$) bounded away from $0$. It follows that there exists a subsequence $\{n_i\}$ of such integers and a probability measure $\mu$ on $S^1$ such that $\mu_{n_i}$ weakly converges to $\mu$. Let $r^{\star}(n)$ be as in \eqref{eq:rstardef}. Moreover, using \eqref{eq:lattice} we conclude that
\[
\begin{split}
\widehat \mu(-2)=\frac{1}{2\pi}\int_{S^1} e^{2it} \, d\mu(t)&=
\lim_{i \rightarrow \infty} \frac{1}{r^{\star}(n_i^2-4)} \sum_{\substack{|z|^2=n_i^2-4 \\ \Re(z)\equiv 0 \pamod 2}} \chi_2(z) \\
&=
\lim_{i \rightarrow \infty} \frac{-2W_2(n_i^2-4)}{r^{\star}(n_i^2-4)} \neq 0,
\end{split}
\]
where in the last step we made the change of variables $z
\rightarrow iz$ and used that $\chi_2(iz)=-\chi_2(z)$.
Therefore, $\mu$ is asymmetric. \end{proof}


\subsection{Proof of Proposition~\ref{prop:m-size}}
\label{sec:proof-proposition}
Given $P \subset \mathbb N$ let $1_P$ denote the indicator function of $P$. Also, for two arithmetic functions $f,g$ let $f\ast g$ denote the Dirichlet convolution of $f$ with $g$.

\begin{lem}\label{lem:firstlemma}
For $\varepsilon>0$ fixed we have
\[
\sum_{\substack{n \le x \\ (n+4,P(x^{\eta_2}))=1}}
(1_{P_{\varepsilon}}\ast 1_{P_{\varepsilon}''})(n) b(n+4)
\gg
\varepsilon^2 \frac{x (\log \log x)}{(\log x)^2}.
\]

\end{lem}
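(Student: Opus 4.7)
The plan is to expand the Dirichlet convolution, reduce the problem to a sieve estimate for each fixed choice of the smaller prime $p_2$, and then sum over $p_2$ using a Mertens-type estimate in sectors.

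First, I would expand the convolution and swap the order of summation to rewrite the left-hand side as
\begin{equation*}
\sum_{p_2 \in P_\varepsilon''} \; \sum_{\substack{p_1 \in P_\varepsilon \\ p_1 \le x/p_2 \\ (p_1 p_2 + 4,\, P(x^{\eta_2})) = 1}} b(p_1 p_2 + 4),
\end{equation*}
up to a negligible diagonal contribution from $p_1 = p_2$. It then suffices to show that for each fixed $p_2 \in P_\varepsilon''$ the inner sum is $\gg \varepsilon \cdot (x/p_2)/(\log x)^2$, and to invoke the Mertens-type bound $\sum_{p_2 \in P_\varepsilon''} 1/p_2 \gg \varepsilon \log \log x$ to conclude. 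The latter follows from Kubilius-style equidistribution of Gaussian primes in narrow sectors (via standard zero-free regions for the Hecke $L$-functions attached to grossencharacters on $\Z[i]$, as already invoked in the proof of Theorem \ref{thm:non-equidist}), combined with Mertens' theorem applied to the range $[(\log x)^{B_0}, x^{\eta_1}]$, which contributes the $\log \log x$ factor.

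For the inner sum with $p_2$ fixed, I would apply a Rosser--Iwaniec lower-bound $\beta$-sieve to the sequence
\begin{equation*}
\mathcal{A}_{p_2} := \{p_1 p_2 + 4 : p_1 \in P_\varepsilon,\; p_1 \le x/p_2\}.
\end{equation*}
The sieve must simultaneously enforce the ``no small prime factor'' constraint (sifting all primes $p \le x^{\eta_2}$) and the ``sum of two squares'' constraint $b(\cdot) = 1$. The latter can be handled by augmenting the sifting set to include all primes $p \equiv 3 \pmod{4}$ up to a suitable size: after such sifting, any surviving element has all prime factors $\equiv 1 \pmod{4}$ and is therefore automatically a sum of two squares. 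The two required sieve inputs are (i) the cardinality $|\mathcal{A}_{p_2}| \sim (2\varepsilon/\pi)\, \pi(x/p_2)$ from equidistribution in sectors; and (ii) a level of distribution $|\mathcal{A}_{p_2, d}| = g(d) |\mathcal{A}_{p_2}|/\varphi(d) + r_d$ with $\sum_{d \le D} |r_d|$ sufficiently small for some $D = x^{\vartheta}$, $\vartheta > 1/2$, supplied by a Bombieri--Vinogradov theorem for primes in narrow sectors.

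The main obstacle will be verifying that the sieve error terms remain uniformly small in the sector parameter $\varepsilon$ and in $p_2 \in P_\varepsilon''$, after Fourier-expanding the sector indicator into Hecke grossencharacters truncated at frequency $\ll 1/\varepsilon$. This is precisely the machinery developed in \cite[Proposition 2.1]{KLR}, which builds on Huxley--Iwaniec \cite{huxley-iwaniec} and Friedlander--Iwaniec \cite[Theorem 14.8]{cribro}; I expect the proof to follow \cite{KLR} closely, with only minor modifications to accommodate the different shift (here $n + 4 = p_1 p_2 + 4$ versus $n^2 - 4$ there) and sector conditions. The standard lower-bound sieve output then yields the target bound of order $\varepsilon \cdot (x/p_2)/(\log x)^2$ for the inner sum, as required.
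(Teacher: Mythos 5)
Your decomposition into a sum over $p_2 \in P_\varepsilon''$ followed by a sieve in $p_1$ and a Mertens-type sum over $p_2$ is a reasonable outline of what underlies the result, but it misses the actual (much shorter) route the paper takes and, more importantly, contains a genuine gap in how you propose to enforce $b(n+4)=1$.

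The paper does not re-derive the sieve estimate. It invokes \cite[Proposition~2.1]{KLR} directly with $Q_0=Q_1=1$ as a black box, which already yields
\[
\sum_{\substack{n \le x \\ (n+4,P(x^{\eta_2}))=1}} (1_{P_{\varepsilon}}\ast 1_{P_{\varepsilon}'})(n)\, b(n+4) \gg \varepsilon^2 \frac{x \log\log x}{(\log x)^{2}},
\]
with the larger range $p_2 \le x^{1/9}$. The only remaining work is to replace $P_\varepsilon'$ by $P_\varepsilon''$, i.e.\ to show the contribution from $p_2 \in [x^{\eta_1}, x^{1/9}]$ is negligible; this is a one-line application of the upper-bound sieve \cite[Theorem~6.9]{cribro} giving $\ll x/(p_2 (\log x)^2)$ per fixed $p_2$, and then summing the reciprocals over this dyadic range gives $O(x/(\log x)^2)$, which lacks the $\log\log x$ and hence is lower order. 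Your plan of fixing $p_2$, running a lower-bound sieve, and then summing over $p_2$ via Mertens amounts to re-proving Proposition~2.1 of \cite{KLR} from scratch, with the extra work of tracking uniformity in $p_2$, when the paper simply quotes that result. (Incidentally, your worry about "the different shift" is moot: the shift there is also $4$, since $n-2$ and $n+2$ differ by $4$, matching $n$ and $n+4$ here.)

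The concrete gap is in your treatment of the constraint $b(n+4)=1$. You claim that "augmenting the sifting set to include all primes $p \equiv 3 \pmod 4$ up to a suitable size" ensures that surviving elements have all prime factors $\equiv 1 \pmod 4$ and are therefore sums of two squares. For this to be literally true you would need to sift all primes $\equiv 3 \pmod 4$ up to $\sqrt{x}$; otherwise a surviving $n+4$ can still have a prime factor $\equiv 3 \pmod 4$ in $(z, x]$ to the first power, destroying $b(n+4)=1$. But no lower-bound sieve with level of distribution $D = x^\vartheta$, $\vartheta < 1$, can sift that far. The condition $b(n+4)=1$ is not a "no small prime factors" condition, and must be incorporated by a different device (e.g.\ carrying $b$ as a weight and appealing to equidistribution of sums of two squares in arithmetic progressions, or parametrizing $n+4 = a^2 + b^2$); this is precisely what \cite{KLR} handles internally and why the paper cites it rather than reconstructing it. As written, your proposal does not contain a correct mechanism for this step.
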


\begin{proof}
  By \cite[Proposition~2.1]{KLR} with $Q_0=Q_1=1$, we have
$$
\sum_{\substack{n \le x \\ (n+4,P(x^{\eta_2}))=1}}
(1_{P_{\varepsilon}}\ast 1_{P_{\varepsilon}'})(n) b(n+4)
\gg
\varepsilon^2 \frac{x (\log \log x)}{(\log x)^2},
$$
where the implied constant depends at most on $\eta_2$.
The contribution from the terms with $n=p_1p_2$ and $x^{\eta_1} \le p_2 \le x^{1/9}$ is
then $O(x/(\log x)^2)$.  To see this, note that the
upper bound sieve (cf. \cite[Theorem~6.9]{cribro}) gives
$$\displaystyle \sum_{\substack{n \le x/p \\ (n(pn+4),
    P(x^{\eta_2}))=1}} 1 \ll \frac{x}{p (\log x)^2}$$
provided $\eta_2$ is sufficiently small and $p \le x^{1/9}$.  The result
follows by summing over
$x^{\eta_1} \le p \le x^{1/9}$.
\end{proof}

\begin{lem} \label{lem:secondlemma}
Let $q \in [x^{\eta_2}, 2x^{1-\eta_2}]$ be a prime. Then for $\eta_1$ sufficiently small in terms of $\eta_2$
we have that
\[
\sum_{\substack{n \le x \\ (n+4,P(x^{\eta_2}))=1 \\ q|n+4}} (1_{P_{\varepsilon}}\ast 1_{P_{\varepsilon}''})(n) b(n+4)  \ll  \frac{x (\log \log x)}{q(\log x)^2}
\]
where the implied constant depends at most on $\eta_1$ and $\eta_2$.
\end{lem}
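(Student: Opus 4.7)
The plan is to swap the order of summation, fixing $p_{2}\in P_{\varepsilon}''$ and sieving the inner sum over $p_{1}=n/p_{2}$. Since $(1_{P_{\varepsilon}}\ast 1_{P_{\varepsilon}''})(n)$ counts factorisations $n=p_{1}p_{2}$ with $p_{1}\in P_{\varepsilon}$ and $p_{2}\in P_{\varepsilon}''$, and $b(n+4)\le 1$, after dropping the angular restriction $p_{1}\in P_{\varepsilon}$ the sum in question is bounded above by
\[
\sum_{p_{2}\in P_{\varepsilon}''}\;\sum_{\substack{p_{1}\le x/p_{2}\\p_{1}\text{ prime}\\q\mid p_{1}p_{2}+4\\(p_{1}p_{2}+4,P(x^{\eta_2}))=1}}1.
\]
Because $q\ge x^{\eta_2}>x^{\eta_1}\ge p_{2}$, we have $\gcd(p_{2},q)=1$, so the constraint $q\mid p_{1}p_{2}+4$ pins $p_{1}$ to a single residue class $-4\,\overline{p_{2}}\pmod q$.

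For each fixed $p_{2}$ the remaining inner sum is a sifting problem for the quadratic form $F(y)=y\,(p_{2}y+4)$, restricted to an arithmetic progression modulo $q$: one demands that both $y=p_{1}$ and $p_{2}p_{1}+4$ avoid every prime up to $x^{\eta_2}$. The sifting dimension is $2$, since for every prime $p\nmid 2p_{2}q$ exactly two residue classes of $y$ are forbidden. The plan is to apply an upper bound sieve — for instance Selberg's $\Lambda^{2}$-sieve, or the linear form sieve from \cite[Theorem 6.9]{cribro} inside an arithmetic progression — with sifting level $z=x^{\eta_2}$, obtaining
\[
\ll\;\frac{x/p_{2}}{q}\prod_{2<p\le x^{\eta_2}}\Bigl(1-\frac{2}{p}\Bigr)\;\ll\;\frac{x}{p_{2}\,q\,(\log x)^{2}}.
\]
The admissibility of the sieve is exactly where the hypothesis ``$\eta_{1}$ sufficiently small in terms of $\eta_{2}$'' enters: the length of the AP-interval is $x/(p_{2}q)\gg x^{\eta_2-\eta_1}$, a fixed positive power of $x$, which comfortably dominates any bounded power of $z=x^{\eta_2}$ and so absorbs the cumulative remainder terms $\sum_{d\mid P(z)}|r_{d}|$.

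Summing the bound over $p_{2}\in P_{\varepsilon}''\subseteq[(\log x)^{B_{0}},\,x^{\eta_1}]$ and applying Mertens' theorem,
\[
\sum_{(\log x)^{B_{0}}\le p_{2}\le x^{\eta_1}}\frac{1}{p_{2}}\;=\;\log\log x-\log\log\log x+O(1)\;\ll\;\log\log x,
\]
yields the claimed upper bound $\ll x(\log\log x)/(q(\log x)^{2})$. The main technical obstacle is precisely the sieve step: one must control the error in the two-dimensional sieve for the form $F(y)$ inside an AP whose modulus $q$ can be almost as large as $x$, and the separation $\eta_{1}<\eta_{2}$ is exactly what ensures the surviving range $x/(p_{2}q)$ is still a nontrivial power of $x$, so that the standard level-of-distribution estimates suffice.
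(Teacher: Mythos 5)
Your plan is structurally the same as the paper's: interchange the order of summation, fix $p_{2}\in P_{\varepsilon}''$, pin $p_{1}$ to a single residue class modulo $q$, apply a two-dimensional upper bound sieve for the form $y(p_{2}y+4)$ in that progression, and then sum over $p_{2}$ using Mertens. That part is sound. However, there is a concrete error in the sieve-admissibility step. You claim that the interval length $x/(p_{2}q)\gg x^{\eta_2-\eta_1}$ ``comfortably dominates any bounded power of $z=x^{\eta_2}$''. This is false: since $\eta_1>0$ we have $x^{\eta_2-\eta_1}<x^{\eta_2}=z$, so the interval does not dominate even $z^{1}$. In particular, a direct application of the $\beta$-sieve from \cite[Theorem~6.9]{cribro} with sieving level $z=x^{\eta_2}$ is not admissible, because that theorem requires the level of distribution $D$ to exceed $z^{c}$ for a fixed $c>1$ (depending on the dimension), and here $D$ must stay below $x/(p_{2}q)<z$. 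As stated, your justification for the hypothesis ``$\eta_1$ sufficiently small in terms of $\eta_2$'' does not make sense.

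The paper avoids this by introducing an auxiliary parameter $\theta$ with $\eta_1<\theta<\eta_2$, chosen sufficiently small, and sieving only up to $z=x^{\theta}$. This is valid since $(n+4,P(x^{\eta_2}))=1$ implies $(n+4,P(x^{\theta}))=1$; it still gives $\prod_{r\le z}(1-2/r)\asymp(\log z)^{-2}\asymp(\log x)^{-2}$ because $\theta$ is a fixed positive constant, and it makes the beta-sieve applicable since $D=z^{c}=x^{c\theta}$ now sits comfortably below $x^{\eta_2-\eta_1}$ once $\theta$ is small enough --- which is precisely where the hypothesis on $\eta_1$ (namely $\eta_1<\theta$ with $c\theta<\eta_2-\eta_1$) is genuinely used. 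Alternatively, your argument can be salvaged by invoking Selberg's $\Lambda^{2}$-sieve with $D$ a small positive power of $x$, since Selberg's sieve does not require $D>z$; but then the admissibility discussion you give needs to be replaced entirely, and the role of the hypothesis on $\eta_1$ is simply to ensure $\eta_2-\eta_1>0$ is a fixed positive constant so that $\log D\asymp\log x$.
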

\begin{proof}
Let $\eta_1 < \theta <\eta_2$ be chosen later. Take $z=x^{\theta}$
and note that if $(n+4, P(x^{\eta_2}))=1$ then $(n+4, P(z))=1$,
hence
\[
\sum_{\substack{n \le x \\ (n+4,P(x^{\eta_2}))=1 \\ q|n+4}}
(1_{P_{\varepsilon}}\ast 1_{P_{\varepsilon}''})(n) b(n+4) \le
\sum_{\substack{n \le x \\ (n+4,P(z))=1 \\ q|n+4}}
(1_{P_{\varepsilon}}\ast 1_{P_{\varepsilon}''})(n)
\]
\begin{equation} \label{eq:primebd}
\le
2  \sum_{p \le
  x^{\eta_1}}  \bigg(\sum_{\substack{n \le x/p \\ (n(pn+4),P(z))=1 \\
    q|pn+4}}  1+O(z) \bigg),
\end{equation}
where we used the trivial bound $|\{p \le z \}| =O( z)$ in the last step.
Let  $$\varrho_p(d)=|\{ a \tmod d) : a(pa+4) \equiv 0 \tmod d)\}|.$$
Choosing $\theta$ so that it is sufficiently small (in terms of $\eta_2$),
it follows upon using an upper bound sieve (cf. \cite[Theorem~6.9]{cribro}), that the innermost sum on the r.h.s of
\eqref{eq:primebd} above is, for $p \le x^{\eta_1}$,
\[
 \ll \frac{x}{pq} \prod_{\substack{r \le z \\ r \text{ is prime} }}\left(1-\frac{\varrho_p(r)}{r} \right)  \ll \frac{x}{pq(\log z)^2},
\]
since $\varrho_p(r)=2$ unless $r=p$. Using the bound above in \eqref{eq:primebd} and summing over $p$ completes the proof.\end{proof}


\begin{lem} \label{lem:thirdlemma}
We have that
\[
\sum_{\substack{2x^{1-\eta_2} \le q \le x \\ q \in Q_{\delta,j}}}\sum_{\substack{n \le x \\ (n+4,P(x^{\eta_2}))=1 \\ q|n+4}} (1_{P_{\varepsilon}}\ast 1_{P_{\varepsilon}''})(n) b(n+4)  \ll   j\delta \frac{x (\log \log x)}{(\log x)^2}
\]
where the implied constant depends at most on $\eta_1$ and $\eta_2$.
\end{lem}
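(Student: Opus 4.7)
The plan is to exploit the largeness $q \ge 2x^{1-\eta_2}$ together with the coprimality condition $(n+4,P(x^{\eta_2}))=1$ to collapse the double sum to a single sum over primes $q$ in the sector $Q_{\delta,j}$ whose shift $q-4$ is a semiprime of prescribed shape, and then to estimate this via a two-factor sieve combined with Hecke equidistribution of Gaussian primes in sectors.

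The first step is the reduction $n+4=q$: if $q\mid n+4$ with $n\le x$ and $q\ge 2x^{1-\eta_2}$, then $(n+4)/q<x^{\eta_2}$ for $x$ large, so the coprimality condition forces $(n+4)/q=1$, i.e.\ $n=q-4$. Since $q\in Q_{\delta,j}$ implies $q\equiv 1\pmod 4$, also $b(q)=1$. Unfolding the Dirichlet convolution and swapping the order of summation, the left-hand side becomes
\[
\sum_{p_2\in P_\varepsilon''}\ \sum_{\substack{2x^{1-\eta_2}\le q\le x\\ q\in Q_{\delta,j}\\ q\equiv 4 \pmod{p_2}\\ (q-4)/p_2\ \text{prime}}} 1.
\]
For fixed $p_2\in P_\varepsilon''\subseteq [(\log x)^{B_0},\, x^{\eta_1}]$, the inner sum counts primes $q$ lying in the union of $j+1$ arcs of width $\delta$ (the set $Q_{\delta,j}$), in the arithmetic progression $q\equiv 4\pmod{p_2}$, with $(q-4)/p_2$ also prime. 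A Selberg (or Brun) upper bound sieve applied to the twin-prime-like configuration ``$q$ prime and $(q-4)/p_2$ prime'' provides a $(\log x)^{-2}$ saving, exactly as in the proof of Lemma~\ref{lem:secondlemma}; the sector condition contributes an extra factor $\asymp j\delta$, the relative density of $Q_{\delta,j}$ among primes $\equiv 1\pmod 4$. This yields an inner bound $\ll j\delta\, x/(p_2(\log x)^2)$, which summed against $\sum_{p_2\in P_\varepsilon''} 1/p_2\asymp \log\log x$ (Mertens) gives the claimed estimate.

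The main obstacle is making this second step rigorous, namely threading the angular constraint through the sieve uniformly in the arithmetic progression modulo $p_2$. The classical Selberg sieve is insensitive to angular information, so the route I would take is to replace $1_{Q_{\delta,j}}$ by a smooth Selberg-type majorant of width $\delta$ and Fourier-expand it in the Hecke Gr\"ossencharacters $\chi_{2k}((\alpha))=(\alpha/|\alpha|)^{2k}$ that already appear in Section~3. The trivial frequency $k=0$ produces the main factor $(j+1)\delta$, while the non-trivial frequencies $k\ne 0$ are absorbed via the classical zero-free region for $L(s,\chi_{2k})$. Since the modulus $p_2\le x^{\eta_1}$ is well within the Bombieri--Vinogradov range for these Hecke $L$-functions, the twisted error terms are negligible uniformly in $p_2$, and the bound survives the final summation over $p_2\in P_\varepsilon''$.
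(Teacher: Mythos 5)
Your first step (the reduction $n+4=q$, collapsing $c=1$ from $q\ge 2x^{1-\eta_2}$ and the coprimality condition, then unfolding the convolution to a sum over $p_2\in P_\varepsilon''$) matches the paper exactly. The divergence occurs afterwards, and your route is genuinely different from what the paper does.

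The paper \emph{drops the primality of $q$ entirely}. Since a prime $q\in Q_{\delta,j}$ with $q>z$ automatically satisfies $(q,P(z))=1$ and contributes $r_\delta(q)\ge 1$ (where $r_\delta(n)$ counts lattice points $z$ with $|z|^2=n$ and $\arg(z)\in I_\delta$), the paper bounds the count of such primes by
\[
\sum_{\substack{n\le x\\ (n(n-4)/p,\,P(z)/(2p))=1\\ p\mid n-4}} r_\delta(n),
\]
then applies a beta sieve of level $D=z^{20}$. The crucial point is that the resulting inner sums $\sum_{n\le x,\ n\equiv\gamma\,(pd)}r_\delta(n)$ are evaluated by a \emph{purely geometric} lattice-point count in sectors over arithmetic progressions (\cite[Prop.~A.1]{KLR}), with no Hecke $L$-functions at all; the sieve density $g(p)\approx 2+\chi_4(p)$ then produces the $(\log z)^{-2}$ saving via the Fundamental Lemma, and $\eta_1(p)/p^2\ll 1/p$ gives the Mertens sum over $p$. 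By contrast, you keep $q$ prime, sieve only on $(q-4)/p_2$, and propose to thread the sector condition through the sieve by expanding $1_{Q_{\delta,j}}$ in Hecke Gr\"ossencharacters. This can be made to work, but it requires a substantially heavier input than what the paper uses: you need equidistribution of Gaussian primes simultaneously in sectors and in arithmetic progressions to moduli $dp_2$ where $d$ runs up to the sieve level $D$ \emph{and} $p_2$ up to $x^{\eta_1}$, which is a Bombieri--Vinogradov theorem for Hecke $L$-functions (\`a la Huxley--Iwaniec), not merely the ``classical zero-free region'' you invoke --- the zero-free region alone only covers moduli $\le(\log x)^{A}$, far short of the sieve level. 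Your text only argues that ``the modulus $p_2\le x^{\eta_1}$ is well within the BV range'' but never accounts for the sieve moduli $d$; this is the step that would need to be made precise, and it is exactly the step the paper avoids by trading $1_{Q_{\delta,j}}(q)\cdot 1_{q\text{ prime}}$ for $r_\delta(q)$. In short: your proposal is a plausible analytic alternative, but as written leaves its key technical input underspecified, whereas the paper's replacement of the prime-in-sector indicator by the lattice-point count $r_\delta$ turns the whole problem into an elementary lattice-counting-plus-sieve argument.
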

\begin{proof}
Note that
\begin{equation} \label{eq:firstineq}
\sum_{\substack{2x^{1-\eta_2} \le q \le x \\ q \in Q_{\delta,j}}}\sum_{\substack{n \le x \\ (n+4,P(x^{\eta_2}))=1 \\ q|n+4}} (1_{P_{\varepsilon}}\ast 1_{P_{\varepsilon}''})(n) b(n+4)  \le 2 \sum_{p \le x^{\eta_1}} \sum_{\substack{2x^{1-\eta_2} \le q \le x \\ q \in Q_{\delta,j}}} \sum_{\substack{p_1 \le x/p \\ q|pp_1+4 \\ (pp_1+4,P(x^{\eta_2}))=1}} 1
\end{equation}
Now, for a prime $p_1$ with $q|pp_1+4$ and $(pp_1+4,P(x^{\eta_2}))=1$ we have
$pp_1+4=qc\le x+4$ with $(c,P(x^{\eta_2}))=1$, and since $q \ge 2x^{1-\eta_2}$ we must have $c=1$. Hence the
two innermost sums on the r.h.s. of \eqref{eq:firstineq} are bounded by
\begin{equation} \label{eq:sumbd}
\le \sum_{\substack{q \le x \\ q\in Q_{\delta,j} \\ p|q-4 \\ (\frac{q-4}{p}, P(x^{\eta_2}))=1}} 1+O(x^{\eta_2}) \le \sum_{\substack{n \le x \\ (n(n-4)/p, \frac{P(z)}{2p})=1 \\ p|n-4}} r_{\delta}(n)+O(x^{\eta_2}+z),
\end{equation}
where
\[r_{\delta}(n):= \sum_{\substack{|z|^2=n \\
    \arg(z) \in I_{\delta}}} 1,\]
    $z=x^{\theta}$, where $\theta< \eta_2$ is sufficiently small,
    and $I_{\delta}$ is the union of
$\delta/2$-neighborhoods around the points $\frac{k\pi}{2(j+1)}$ where $0 \le k < 4(j+1)$. We will assume that $j<1/(10\delta)$ so that the intervals $I_{\delta}$ are disjoint. The case where $j>1/(10\delta)$ is similar (here one can replace $r_{\delta}(n)$ with $r(n)$ on the r.h.s. of \eqref{eq:sumbd}).
Let $\Lambda=\{\lambda_d\}_{d \le D}$ be an upper bound beta-sieve of level $D=z^{20}$ (see \cite[Section 6.4]{cribro}). Additionally, since $\Lambda$ is an upper bound sieve we have $1_{(m,\frac{P(z)}{2p})=1} =\sum_{d|(m,\frac{P(z)}{2p})} \mu(n) \le \sum_{d|(m,\frac{P(z)}{2p})} \lambda_d$, for any $m \in \mathbb N$.
Hence, we find that
\begin{equation} \label{eq:one}
\sum_{\substack{n \le x \\ (n(n-4)/p, \frac{P(z)}{2p})=1 \\ p|n-4}} r_{\delta}(n) \le \sum_{\substack{d \le D \\ d|\frac{P(z)}{2p}}} \lambda_d \sum_{\substack{a \pamod d \\ a(a-4)\equiv 0 \pamod d}}\sum_{\substack{n \le x \\ n \equiv \gamma \pamod{pd}}} r_{\delta}(n)
\end{equation}
where $\gamma=\gamma_a=d\overline d 4+p \overline p a$ (here $d\overline d \equiv 1 \tmod p)$ and $p \overline p \equiv 1 \tmod d)$) and note $(\gamma, pd)=(a,d)$.
For $b,m>0$ let
\[
\eta_b(m):=\left|\{\alpha_1,\alpha_2 \tmod m) : \alpha_1^2+\alpha_2^2 \equiv b \tmod m)\}\right|.
\]
By \cite[Proposition~A.1]{KLR} we have for $p \le x^{\eta_1}$
\begin{equation}
  \label{eq:r-epsilon-sum}
\sum_{\substack{n \le x \\ n \equiv \gamma \pamod{pd}}} r_{\delta}(n)
=4(j+1)\delta \frac{x}{(pd)^2} \eta_{\gamma}(pd)+O(x^{9/10});
\end{equation}
note that $\eta_{\gamma}(pd)=\eta_1(p)\eta_a(d)$ (see \cite[Eq'ns (A.10)-(A.12)]{KLR}). For odd square-free $d$,
let
\[g(d)= \frac{1}{d}\displaystyle \sum_{\substack{a \tmod d) \\
    a(a-4) \equiv 0 \tmod d)}} \eta_a(d)
\]
and
note $g(d)$ is a multiplicative function. Further, by \cite[(A.10)]{KLR},
\begin{equation} \label{eq:gform}
g(p)=2+\chi_4(p)-\frac{2\chi_4(p)}{p}
\end{equation}
for $p>2$, where $\chi_4$ is the non-principal character $\tmod 4)$. Using (\ref{eq:r-epsilon-sum}), we find that the right hand
side of \eqref{eq:one} equals
\begin{equation} \label{eq:asymformula}
4(j+1)\delta \frac{\eta_1(p)x}{p^2}\sum_{\substack{d \le D \\ d |\frac{P(z)}{2p}}} \frac{\lambda_d g(d)}{d}+O(x^{10/11}).
\end{equation}
Applying the  Fundamental Lemma of the Sieve (see \cite[Lemma 6.8, p. 68]{cribro}) and \eqref{eq:gform} we get
\begin{equation} \label{eq:sievebd}
 \sum_{\substack{d \le D \\ d|\frac{P(z)}{2p}}} \frac{\lambda_d g(d)}{d} \ll \prod_{p \le z} \left( 1-\frac{2+\chi_4(p)}{p}\right) \asymp \frac{1}{(\log z)^2}.
\end{equation}
Combining \eqref{eq:firstineq},\eqref{eq:sumbd},\eqref{eq:one}, \eqref{eq:asymformula} and \eqref{eq:sievebd}, as well as using that $\eta_{1}(p)/p^{2} \ll 1/p$ (see \cite[(A.11)]{KLR}), and summing over
$p$ completes the proof. \end{proof}

\begin{proof}[Proof of Proposition \ref{prop:m-size}]The first assertion of Proposition~\ref{prop:m-size} is immediate from
Lemma \ref{lem:firstlemma}.  The second assertion follows from Lemmas \ref{lem:secondlemma} and \ref{lem:thirdlemma}.  Namely, the contribution from large
$q \in [2x^{1-\eta_{2}}, x]$ is, by Lemma \ref{lem:thirdlemma},
$\ll j\delta x (\log \log x) /(\log x)^{2}$.  Using the  Lemma \ref{lem:secondlemma},
together with the estimate
\[
\sum_{q \in [x^{\eta_2},2x^{1-\eta_2 }]\cap Q_{\delta,j} } \frac{1}{q}\ll j\delta
\]
which follows from the Prime Number Theorem for Gaussian primes in sectors,
we
find that the contribution from $q \in [x^{\eta_2},x^{1-\eta_2 }] \cap Q_{\delta,j}$ is
also $\ll j\delta x (\log \log x)/(\log x)^{2}$.
\end{proof}
%


\section{Distribution of real parts}
\label{sec:distr-real-parts}

In what follows we only sketch the proof of the distribution modulo $1$
result, with fine details left to the reader.
The limit $n\rightarrow\infty$ corresponds to $|w|\uparrow 1$ on the Poincar\'{e} disk model $\Db := \{|w|<1\}\subseteq\C$,
via the map $w=W(z)=\frac{z-i}{1-iz}$ from $\Hb$ to $\Db$. We choose a large parameter $K>0$, that will be sent to infinity at the last stage,
and restrict to $\Hb\cap \{|\Re z|\le K\}$, having the effect of removing a small angular sector around $i$ of $\Db$.
It then follows that, under this restriction, all the imaginary parts $\Im \gamma(i)$ are {\em uniformly small}, and, thanks to the equidistribution
of Theorem \ref{thm:equidistributed} (that is, the angular equidistribution on $\Db$) we deduce that
the density $p$ is given by
\begin{equation}
\label{eq:f pdf Re sum}
p(x) = \sum\limits_{k\in \Z} |W'(x+k)| \cdot \frac{1}{2\pi} = \frac{1}{\pi}\sum\limits_{k\in\Z} \frac{1}{1+(x+k)^{2}},
\end{equation}
$x\in [0,1]$, with the derivative given by $W'(z) = \frac{2}{(iz-1)^{2}}$. It is then possible to sum up the series on the r.h.s. of
\eqref{eq:f pdf Re sum} to derive \eqref{eq:f pdf Re}.

\end{document}